\theoremstyle{definition} 
\newtheorem{Theorem}{Theorem}[section]
\newtheorem{Corollary}[Theorem]{Corollary} 
\newtheorem{Lemma}[Theorem]{Lemma}
\newtheorem{Proposition}[Theorem]{Proposition}
\newtheorem{Definition}[Theorem]{Definition} 
\newcommand{\llangle}{\langle\!\langle}
\newcommand{\rrangle}{\rangle\!\rangle}
\theoremstyle{plain} 
\newtheorem{step}{Step}
\newtheorem*{Strong Atiyah conjecture}{Strong Atiyah conjecture}
\newtheorem*{Strong Atiyah conjecture(Algebraic Version)}{Strong Atiyah conjecture (Algebraic Version)}
\newcommand{\inv}[1]{{#1}^{-1}}
\DeclareMathOperator{\Id}{Id}
\DeclareMathOperator{\Aut}{Aut}
\title{Profinite properties of algebraically clean graphs of free groups}
\author{Kasia Jankiewicz}
\author{Kevin Schreve}
\address{Department of Mathematics, University of California, Santa Cruz, CA 95064}
\email{kasia@ucsc.edu}
\address{Department of Mathematics, Louisiana State University, Baton Rouge, LA 70806}
\email{kschreve@lsu.edu}
\subjclass[2020]{20F65, 20E26, 20F36}
\keywords{algebraically clean graphs of groups, residual $p$-finiteness, cohomological $p$-completeness, cohomological goodness, Artin groups}
\begin{document}

\maketitle

\begin{abstract}
We prove that for every prime $p$ algebraically clean graphs of groups are virtually residually $p$-finite and cohomologically $p$-complete. We also prove that they are cohomologically good. We apply this to certain $2$-dimensional Artin groups.

\end{abstract}

\section{Introduction}

An \emph{algebraically clean graph of free groups} is a graph of groups where each vertex group and edge group are finite rank free groups, and all the inclusion maps are inclusions of free factors. Examples of the fundamental groups of algebraically clean graph groups include free-by-cyclic groups, the fundamental groups of clean $2$-complexes in the sense of Wise \cite{WiseCycHNN}, and certain $2$-dimensional Artin groups \cite{JankiewiczArtinRf, JankiewiczArtinSplittings}. We note that (many among) the former examples are known to not admit (virtual) cocompact actions on CAT(0) cube complexes, so they are not virtually cocompactly special. In particular, the family of algebraically clean graphs of groups is strictly larger than the family of the fundamental groups of finite clean $2$-complexes, which all are virtually special.

\subsection{Virtual residual $p$-finiteness}
A group $G$ is \emph{residually finite} if for every $g\in G-\{1\}$ there exists a quotient $\phi:G\to K$ where $K$ is a finite group and $\phi(g) \neq 1$. The fundamental groups of algebraically clean graph of free groups are known to be residually finite \cite[Thm 3.4]{WisePolygons}.

Let $p$ be a prime number. 
A group $G$ is \emph{residually $p$-finite} if for every $g\in G-\{1\}$ there exists a quotient $\phi:G\to K$ where $K$ is a finite $p$-group and $\phi(g) \neq 1$. Clearly, every residually $p$-finite group is residually finite, but the converse does not hold. 

\begin{Theorem}\label{thm: main residually p}
For every prime $p$, the fundamental group of an algebraically clean graph of free groups has a finite index subgroup that is residually $p$-finite. 
\end{Theorem}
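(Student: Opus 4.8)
The plan is to combine three ingredients: that finite-rank free groups are residually $p$-finite with free factors closed in the pro-$p$ topology, a ``homological linearization'' that controls the edge monodromies after passing to finite index, and a Bass--Serre assembly of the resulting finite $p$-quotients. I would first record the local input. A finite-rank free group $F$ is residually $p$-finite, its lower central $p$-series quotients $F/\gamma_i^{(p)}(F)$ are finite $p$-groups with trivial intersection, and a free factor $H\le F$ is a retract, hence closed in the pro-$p$ topology of $F$; moreover the pro-$p$ topology of $F$ induces on $H$ its own pro-$p$ topology (the pro-$p$ completion of $F=H\ast K$ being the free pro-$p$ product of the completions). This is exactly the ``edge groups are $p$-separable with matching induced topology'' condition one needs at every edge, and it comes for free from the algebraically clean hypothesis.

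The engine of the argument is the observation that for a finite $p$-group $P$ the kernel of the reduction $\Aut(P)\to\Aut(P/\Phi(P))$ is itself a $p$-group. Since $P_i/\Phi(P_i)=H_1(F;\mathbb{F}_p)$ for $P_i=F/\gamma_i^{(p)}(F)$, it follows that if $\phi\in\Aut(F)$ acts trivially on $H_1(F;\mathbb{F}_p)=F/\Phi(F)$, then $\phi$ induces an automorphism of $p$-power order on every $P_i$. I would use this to settle the free-by-cyclic model case $F\rtimes_\phi\mathbb{Z}$: after replacing $\phi$ by the power $\phi^m$, where $m$ is the order of $\phi$ in $GL(H_1(F;\mathbb{F}_p))$ --- that is, after passing to the finite-index subgroup $F\rtimes_{\phi^m}\mathbb{Z}$ --- every nontrivial $f\in F$ survives in some $P_i$, the automorphism $\phi^m$ acts there with order a $p$-power $p^k$, and the finite $p$-group $P_i\rtimes\mathbb{Z}/p^k$ detects $f$, while elements with nonzero $\mathbb{Z}$-coordinate are detected through $\mathbb{Z}\to\mathbb{Z}/p^k$. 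This already shows why passing to finite index is unavoidable.

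For a general algebraically clean $G=\pi_1(\mathcal{G})$ I would pass to a finite-index subgroup on which all the edge-inclusion monodromies act trivially on the mod-$p$ homology of the vertex groups: this monodromy data takes values in the finite group $\prod_v GL(H_1(G_v;\mathbb{F}_p))$, so residual finiteness of $G$ (\cite[Thm 3.4]{WisePolygons}) lets me realize it as a finite quotient and pass to the kernel. On this subgroup the engine makes every monodromy act as a $p$-automorphism on each lower central $p$-series quotient of each vertex group, while the free-factor property guarantees the subgroups $\ker(G_v\to H_1(G_v;\mathbb{F}_p))$ are mutually compatible along edges, since an edge group being a free factor forces $N_v\cap G_e=N_e$. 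I would then assemble, for a given $1\ne g$, a finite $p$-group quotient detecting $g$ by pushing the lower central $p$-series quotients of the vertex groups through the Bass--Serre tree, using the matching induced pro-$p$ topologies to glue the vertex-level quotients consistently across each edge.

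The main obstacle is this last assembly step, namely establishing $p$-efficiency of the graph of groups: producing a single finite $p$-group quotient of the finite-index subgroup that restricts to the prescribed $p$-quotients on all vertex groups simultaneously. The homological normalization is what unblocks it, turning the a priori non-$p$ monodromies into maps of $p$-power order that can be absorbed into a finite $p$-group built over the quotient graph; but one must still check that a finite-index subgroup adapted to this normalization retains a graph-of-groups description in which the edge groups are closed with matching induced pro-$p$ topologies, so that the gluing goes through. Verifying this compatibility, rather than the homological bookkeeping, is where I expect the bulk of the technical work to lie.
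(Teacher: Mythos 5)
Your skeleton matches the paper's proof: the same local input (free factors are retracts, hence closed with the correct induced pro-$p$ topology), the same engine (the Hall-type fact that an automorphism acting trivially on $H_1(F;\mathbb F_p)$ has $p$-power order on every $F/\gamma_n^p(F)$, which is the paper's Proposition~\ref{prop: trivial action on L1}), the same model case $F\rtimes\mathbb Z$, and the same plan of homological normalization followed by a graph-of-groups assembly. But there are two genuine gaps. First, your normalization mechanism does not work as stated: residual finiteness of $G$ gives finite quotients separating elements; it does not let you ``realize'' the monodromy data as a finite quotient, because the edge maps are partial automorphisms, not automorphisms of the vertex groups, and so do not by themselves define any homomorphism out of $G$. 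The paper's fix is structural: collapse a spanning tree (Proposition~\ref{prop:multiple HNN}) so that there is a single vertex group $F$ and the stable letters generate a free group $Q'$ that is a quotient of $G'$; then \emph{choose extensions} $\phi_i^{ext}\in\Aut(F)$ of the partial automorphisms --- this is exactly what the algebraically clean hypothesis buys --- and pull back $\ker\bigl(Q'\to\Aut(H_1(F;\mathbb F_p))\bigr)$ along $G'\to Q'$. No appeal to residual finiteness is needed, or available, at this point.

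Second, and more seriously, the step you defer as ``the main obstacle'' is the actual content of the theorem, and your sketch of it (gluing vertex-level $p$-quotients along the Bass--Serre tree using induced topologies) cannot be closed as stated, because even after normalization the monodromies act on $F/\gamma_n^p(F)$ only with $p$-power order, not trivially, so the vertex quotients do not glue directly into a single finite $p$-quotient. The paper's resolution has four moves: (i) feed the lower $p$-central filtration into the Aschenbrenner--Friedl criterion (Theorem~\ref{thm: criterion AF}), which reduces everything to showing that each quotient graph of finite $p$-groups $\mathcal G/{\bf G}_n$ has residually $p$-finite fundamental group; (ii) inside that quotient, pass to a \emph{further} cover of $p$-power degree, corresponding to $Q_n=\ker\bigl(Q\to\Aut(F/\gamma_n^p(F))\bigr)$, whose index is a $p$-power precisely by Corollary~\ref{cor: trivial action of a subgroup}, on which the monodromies act trivially on $F/\gamma_n^p(F)$, so that all vertex groups can be consistently identified with one copy of $F/\gamma_n^p(F)$ by composing along paths to a basepoint (independence of the path is exactly what the definition of $Q_n$ guarantees); (iii) observe that such a graph of groups maps onto the finite $p$-group $F/\gamma_n^p(F)$ with free kernel, hence is (free)-by-($p$-group) and residually $p$-finite (Lemma~\ref{lem:graphs with partial identities}); (iv) climb back up through the $p$-power index extension using Lemma~\ref{lem: res p}. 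Without steps (i)--(iv), or a substitute for them, your proposal is an outline of the theorem rather than a proof of it.
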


We do not know whether algebraically clean graphs of free groups are linear. We note that linear groups are known to be virtually residually $p$-finite \cite{Platonov68} for all but finitely many primes $p$.
There have been previous combination theorems concerning residual $p$-finiteness, originating in the work of Higman \cite{Higman64}, see e.g.\ \cite{Wilkes2019, Azarov2017, Sokolov2023} and references therein.

\subsection{Cohomology of profinite and pro-$p$ completions}
For a residually finite group $G$, the \emph{profinite completion} $\widehat G$ of $G$ is defined as 
$$\widehat G = \varprojlim_{[G:H] < \infty} G/H,$$
where the inverse limit is taken over the system of finite quotients of $G$. 
For every $G$, there is a canonical homomorphism $i: G \rightarrow \widehat G$ which sends $g \in G$ to the cosets $gH$.
 A  group $G$ is called \textit{cohomologically good} (also known as \emph{good in the sense of Serre}) if for every finite $G$-module $M$ the induced homomorphism $$H^\ast_{cont}(\widehat G, M) = \varprojlim_{[G:H] < \infty} H^\ast(G/H, M) \overset{i^\ast}\rightarrow H^\ast(G, M)$$ is an isomorphism. We always take the cohomology of a profinite group to be its continuous cohomology. 
Goodness was introduced in \cite[Exercises 2.6]{SerreGalois97}

We can analogously define \emph{cohomological $p$-completeness} for a residually $p$-finite group. In this case, the \emph{pro-$p$ completion} $\widehat G_p$ of $G$ is given by
\[
\widehat G_p = \varprojlim G/H
\]
where $H$ varies over all the subgroups of $G$ whose index is a power of $p$. 
Then $G$ is \emph{cohomologically $p$-complete} if the homomorphism $G\to\widehat G_p$ induces an isomorphism \[H^*_{cont}(\widehat G_p, \mathbb F_p)\to H^*(G, \mathbb F_p)\] 
where we assume the $G$-action on $\mathbb{F}_p$ is trivial.

\begin{Theorem}\label{thm: main good}
The fundamental group of an algebraically clean graph of free groups is 
\begin{enumerate}
\item cohomologically good,
\item for every prime $p$, virtually cohomologically $p$-complete.
\end{enumerate}
\end{Theorem}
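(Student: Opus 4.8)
The plan is to prove both statements by induction on the structure of the graph of groups, using combination theorems for cohomological goodness and $p$-completeness. The key principle is that both properties are inherited under amalgamated products and HNN extensions, provided the edge and vertex groups behave well and the relevant cohomological conditions hold. Since the building blocks here are finite rank free groups — which are the archetypal examples of cohomologically good groups (being of cohomological dimension one, with trivial higher cohomology that matches the profinite side) and are residually $p$-finite — the strategy is to bootstrap from the free vertex groups up to the full fundamental group.

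First I would record the relevant combination theorems. For goodness, the standard tool (due to Serre, and elaborated by Grunewald--Jaikin-Zapirain--Zalesskii and others) says that if $G = A \ast_C B$ or $G = A\ast_C$ is an amalgam or HNN extension where $A$, $B$, and $C$ are cohomologically good, and if the profinite topology on $G$ induces the full profinite topology on the edge group $C$ (equivalently, $C$ is \emph{efficiently} embedded, so that $\widehat C \to \widehat G$ is injective and the Mayer--Vietoris sequence for $\widehat G$ matches that for $G$), then $G$ is cohomologically good. The analogous statement holds for $p$-completeness with $\mathbb{F}_p$ coefficients, where one needs the pro-$p$ topology to be compatible along edge groups and the pro-$p$ Mayer--Vietoris sequence to be exact. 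I would set up the graph-of-groups decomposition of $\pi_1$ and verify that free factors of free groups give rise to edge inclusions that are \emph{retracts}, which is precisely the algebraically clean hypothesis: each edge group is a free factor of the adjacent vertex groups. Free factor inclusions are retractions, and retracts interact extremely well with both profinite and pro-$p$ topologies, since a retraction $A \to C$ induces a split surjection on completions.

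The heart of the argument, and the main obstacle, is establishing the \emph{compatibility of topologies}: one must show that the profinite (resp. pro-$p$) topology on the full group $G$ restricts to the full profinite (resp. pro-$p$) topology on each vertex and edge group, so that the completion functor is exact on the relevant short exact and Mayer--Vietoris sequences. This is where residual $p$-finiteness from Theorem~\ref{thm: main residually p} enters for part (2): passing to the finite-index residually $p$-finite subgroup guaranteed there, one obtains a graph of groups (by restricting the splitting) whose structure is still algebraically clean and whose edge groups are now separable in the pro-$p$ topology. The free factor condition is exactly what makes the edge groups $p$-separable and the inclusions induce injections on pro-$p$ completions, because a free factor retraction survives pro-$p$ completion. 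I expect the delicate point to be verifying exactness of the continuous Mayer--Vietoris sequence for the completions — ensuring no $\varprojlim^1$ obstruction appears and that the edge group cohomology injects correctly — which is where the retraction property does the essential work by splitting the relevant maps.

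For part (1), goodness, I would run the same induction but over the full (unaltered) fundamental group, since goodness does not require $p$-finiteness and the base groups (free groups) are good with the profinite topology inducing the full topology on any free factor. Inducting over the edges and vertices of the finite graph, at each amalgamation or HNN step I would invoke the goodness combination theorem, using that free factors are separable in the profinite topology (free groups being subgroup separable) to confirm the topology-compatibility hypothesis, and concluding that $G$ is good. The induction terminates after finitely many steps since the graph is finite. In summary, both parts reduce to checking the hypotheses of known combination theorems, and the algebraically clean structure — free factor edge inclusions, hence retractions — is precisely designed to make those hypotheses hold; the technical crux is the topology-compatibility and exactness of the completed Mayer--Vietoris sequence.
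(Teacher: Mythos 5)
Your proposal correctly identifies the right framework (the combination theorems of Aschenbrenner--Friedl and Wilton--Zalesskii, exactness of the completed Mayer--Vietoris sequence, and ``efficiency'' of the decomposition as the key hypothesis), and it correctly flags topology-compatibility as ``the heart of the argument.'' But it then fails to supply that heart. The free-factor/retract property and subgroup separability of free groups only prove the \emph{easy} half of efficiency: that the profinite (resp.\ pro-$p$) topology on a \emph{vertex} group induces the full topology on its \emph{edge} groups (this is the paper's Lemma~\ref{l:retract}). The hard half is that the topology on the full group $G$ induces the full topology on a vertex group $F$, i.e.\ that for every finite-index (resp.\ $p$-power-index) subgroup $C \le F$ there is a finite-index (resp.\ $p$-power-index) subgroup $H \le G$ with $H \cap F \subseteq C$. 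Nothing about $F$ being free or LERF gives this, because the condition is about finite quotients of $G$, not of $F$; indeed general graphs of free groups can even fail to be residually finite (the paper cites simple amalgams of free groups), so some substantive use of the clean structure is unavoidable here, and your proposal never makes one.

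The paper closes this gap in two different ways, neither of which appears in your sketch. For goodness, it uses Proposition~\ref{prop:multiple HNN} to realize $G$ as a multiple HNN extension of a single free group $F$ whose edge maps extend to \emph{automorphisms} of $F$; then any finite-index \emph{characteristic} subgroup $C \le F$ is preserved by these automorphisms, so the HNN data descends to a quotient graph of groups with finite vertex group $F/C$, whose fundamental group $G'$ is virtually free; pulling back a finite-index free subgroup of $G'$ meeting $F/C$ trivially gives the desired $H$ with $H \cap F \subseteq C$. For $p$-completeness, the paper does \emph{not} use Theorem~\ref{thm: main residually p} as a black box (as you propose): residual $p$-finiteness of the finite-index subgroup is strictly weaker than what is needed. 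Instead it re-uses the explicit construction in that proof --- the lower $p$-central filtration $\gamma_n^p(F)$, the passage to the subgroup on which the edge maps act trivially on $L_1^p(F)$, and the resulting finite $p$-quotients of $\pi_1(\mathcal G)$ that restrict on $F$ to the canonical maps $F \to F/\gamma_n^p(F)$ --- and it is precisely these quotients that certify $p$-efficiency. A related structural flaw in your plan: inducting edge-by-edge over the graph produces intermediate vertex groups that are themselves graphs of free groups rather than free groups, so even your edge-in-vertex retraction argument no longer applies at later stages; the paper avoids this by collapsing to a single-vertex (multiple HNN) decomposition once and for all.
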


For each $p$, the cohomologically complete finite index subgroup is a priori different. 
General graphs of free groups do not always satisfy the above theorem. Indeed, there exist examples of amalgamated products of free groups that are not residually finite \cite{Bhattacharjee94, Wise96Thesis}. There are even examples of simple groups that split as amalgamated products of free groups \cite{BurgerMozes97}.

\subsection{Virtual poly-freeness}
A group $G$ is \emph{poly-free} if it admits a chain of subgroups $1 = G_0\unlhd G_1\unlhd \dots\unlhd G_n =G$ such that $G_{i}/G_{G_i-1}$ is a free group (of possibly infinite rank). We say $G$ is \emph{normally poly-free} if additionally all subgroups $G_i$ are normal in $G$. 

\begin{Theorem}\label{thm:poly-free}
Algebraically clean graphs of free groups are normally poly-free.
\end{Theorem}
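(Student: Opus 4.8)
The plan is to realize $G=\pi_1(\mathcal G)$ as free-by-free, which yields a normal poly-free chain of length two. Write $\Gamma$ for the underlying graph of the algebraically clean graph of groups $\mathcal G$, and let $\phi\colon G\to\pi_1(\Gamma)$ be the canonical retraction onto the free fundamental group of the underlying graph, obtained by killing every vertex group. Put $K=\ker\phi\trianglelefteq G$, so that $G/K\cong\pi_1(\Gamma)$ is free. Both $1$ and $K$ are normal in $G$, and once we know that $K$ is free, the successive quotients $K$ and $G/K\cong\pi_1(\Gamma)$ are both free, so the chain $1\unlhd K\unlhd G$ will witness that $G$ is normally poly-free. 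Thus everything reduces to proving that $K$ is free.

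The key input is the following lemma: if $\mathcal H$ is a graph of free groups whose underlying graph is a tree and each of whose edge inclusions is a free factor inclusion, then $\pi_1(\mathcal H)$ is free. I would prove this by exhausting the (possibly infinite) tree by finite subtrees $S_0\subset S_1\subset\cdots$, where $S_{i+1}$ is obtained from $S_i$ by attaching a single leaf edge $e$ at a new vertex $v$. Writing $A_v=A_e\ast B_v$ via the free factor hypothesis, the amalgam $\pi_1(\mathcal H|_{S_{i+1}})=\pi_1(\mathcal H|_{S_i})\ast_{A_e}A_v$ collapses to $\pi_1(\mathcal H|_{S_i})\ast B_v$. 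Hence each finite stage is free and embeds as a free factor in the next, so the increasing union $\pi_1(\mathcal H)=\varinjlim_i\pi_1(\mathcal H|_{S_i})$ is free, since a nested union of free groups along free factor inclusions is free by compatibly extending bases.

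To identify $K$, I would pass to the cover of a graph-of-spaces model $X$ for $\mathcal G$ obtained by pulling back the universal cover of $\Gamma$ along the collapse map $X\to\Gamma$; by construction this cover has fundamental group $\phi^{-1}(1)=K$, and its underlying graph is the universal cover of $\Gamma$, which is a tree. Because $\phi$ kills the vertex groups, each vertex space lifts to disjoint homeomorphic copies, so the lifted graph of spaces is again a graph of free groups with the same free factor edge inclusions, now over a tree. Equivalently, in Bass--Serre terms, $K$ acts on the Bass--Serre tree $\mathbf T$ of $\mathcal G$ with quotient this tree and with vertex and edge stabilizers the conjugates of the (free) vertex and edge groups, which lie in $K$ because they map trivially under $\phi$. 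The lemma then gives that $K$ is free, completing the argument.

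The step requiring the most care is the identification of $K$ as the fundamental group of a tree of groups: one must check that pulling back the universal cover of $\Gamma$ indeed realizes $\ker\phi$ and that the vertex and edge groups survive intact rather than being cut down to proper subgroups. Since $\Gamma$ is finite, this tree is typically infinite, so the freeness lemma must be applied in its direct-limit form; once these two points are in place, both the freeness of $K$ and the normality of the two-step chain are formal.
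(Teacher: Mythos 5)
Your proof is correct and follows essentially the same route as the paper: both exhibit the chain $1 \unlhd K \unlhd G$ where $K$ is the kernel of the projection onto the free group $\pi_1(\Gamma)$ of the underlying graph, and both deduce freeness of $K$ from its induced Bass--Serre decomposition as a tree of free groups amalgamated along free factors. The only differences are cosmetic: the paper first collapses a spanning tree to reduce to a one-vertex graph (Proposition~\ref{prop:multiple HNN}), and it asserts without detail that a tree of free groups glued along free factors is free, which your exhaustion/direct-limit argument supplies.
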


This has a number of consequences; for instance it implies these groups are locally indicable, hence left-orderable \cite{RhemtullaRolfsen2002}, and satisfy the $K-$ and $L-$theoretic Farrell-Jones Conjecture \cite{bestvina2021farrelljones, BruckKielakWu21}.


\subsection{Applications to Artin groups}
An \emph{Artin group} is given by a presentation
$$A = \langle s_1,\dots, s_k | \underbrace{s_is_js_i\cdots}_{m_{ij} \hspace{.5mm} terms} = \underbrace{s_js_is_j\cdots}_{m_{ij} \hspace{.5mm} terms} \rangle$$ 
where $m_{ij} \in \{2,3,\dots\} \cup \{\infty\}$. We understand $m_{ij} = \infty$ as no relation involving $s_i$ and $s_j$.
A \emph{triangle Artin group} $A_{\ell mn}$ is an Artin group where $k=3$, and $m_{12} = \ell$, $m_{23} = m$, and $m_{13}= n$.

 Since finite type Artin groups are linear (\cite{Krammer2002}, \cite{Bigelow2001} for braid groups, and \cite{CohenWales2002}, \cite{Digne2003} in general), it follows that they are also virtually residually $p$-finite. Moreover, pure Artin groups of type $A_n, C_n, G_2$ and $I_2(n)$ 
are residually $p$ and cohomologically $p$-complete for all $p$ \cite{AschenbrennerFriedl13}, and cohomologically good \cite{SerreGalois97}.

With the next corollary in mind, we note that the only spherical triangle Artin groups are the $A_{22n} = A(I_2(n))\times \mathbb Z$ for $n\geq 2$, and $A_{23n}$ where $n\in\{3,4,5\}$. Among those, the even ones, $A_{22n}$ for even $n$, all are known be cohomologically good and virtually residually $p$-finite and cohomologically $p$-complete for all $p$.

\begin{Corollary}\label{cor:Artin}
A triangle Artin $A_{\ell mn}$ where $\ell\leq m\leq n$ is 
\begin{itemize}
\item residually finite and cohomologically good,
\item for each prime $p$, virtually residually $p$-finite and cohomologically $p$-complete, 
\item virtually normally poly-free,
\end{itemize}
provided that 
\begin{itemize}
\item $\ell = 2$, and $m,n\geq 4$ and at least one of them is even, or
\item $\ell,m,n\geq 4$ except for the case where $\ell = m = 4$ and $n$ is odd, or
\end{itemize}
In particular, all even triangle Artin groups and all extra-extra-large triangle Artin groups (i.e.\ where $\ell,m,n\geq 5$) satisfy the above.

Moreover, there are many more $2$-dimensional Artin groups that have the above properties. See \cite{JankiewiczArtinRf} for a combinatorial criterion on the defining graph, which ensure that the associated Artin group is virtually algebraically clean graph of free groups.
\end{Corollary}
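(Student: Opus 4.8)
The plan is to reduce Corollary~\ref{cor:Artin} to Theorems~\ref{thm: main residually p}, \ref{thm: main good}, and \ref{thm:poly-free} by showing that, under the stated arithmetic hypotheses, the triangle Artin group $A_{\ell mn}$ is \emph{virtually} the fundamental group of an algebraically clean graph of free groups; the remaining content is then standard inheritance of properties along finite-index subgroups and overgroups. First I would invoke the splitting and combinatorial results of \cite{JankiewiczArtinSplittings, JankiewiczArtinRf}: for exactly the listed values of $(\ell,m,n)$ they produce a finite-index subgroup $G\leq A_{\ell mn}$ that splits as a graph of finitely generated free groups with edge inclusions that are inclusions of free factors. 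The parity and size conditions ($\ell=2$ with $m,n\geq 4$ and at least one even; or $\ell,m,n\geq 4$ apart from $\ell=m=4$ with $n$ odd) are precisely the cases in which the natural splitting coming from the defining triangle can be made clean after passing to a suitable finite cover, so the case analysis in the corollary is inherited verbatim from that criterion.

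With such a clean finite-index subgroup $G$ in hand, the virtual assertions follow immediately. Theorem~\ref{thm:poly-free} shows $G$ is normally poly-free, so $A_{\ell mn}$ is virtually normally poly-free. For a fixed prime $p$, Theorem~\ref{thm: main residually p} produces a finite-index subgroup of $G$, still of finite index in $A_{\ell mn}$, that is residually $p$-finite, so $A_{\ell mn}$ is virtually residually $p$-finite; and Theorem~\ref{thm: main good}(2) gives that $G$, hence $A_{\ell mn}$, is virtually cohomologically $p$-complete. The subgroup witnessing these $p$-local properties may depend on $p$, which the \emph{virtual} formulation permits. Residual finiteness of the whole group then follows because $A_{\ell mn}$ contains a finite-index residually finite subgroup, and residual finiteness passes to finite-index overgroups via the normal-core intersection argument.

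The one genuinely non-virtual cohomological claim is that the full group $A_{\ell mn}$ is cohomologically good. Here I would use that goodness is a commensurability invariant: if a finite-index subgroup is good, then so is the ambient group, which one checks by comparing the Lyndon--Hochschild--Serre spectral sequence of $G\leq A_{\ell mn}$ with the analogous spectral sequence for $\widehat G\leq\widehat{A_{\ell mn}}$ and invoking the five lemma. Applying this to Theorem~\ref{thm: main good}(1) for $G$ yields goodness of $A_{\ell mn}$. The main obstacle in the whole argument is the very first step: producing the explicit finite cover on which the triangle splitting becomes algebraically clean, and verifying that the stated arithmetic conditions are exactly the locus where this succeeds. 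Once that input from \cite{JankiewiczArtinSplittings, JankiewiczArtinRf} is granted, the rest is bookkeeping with finite-index subgroups.
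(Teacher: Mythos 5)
Your proposal is correct and follows essentially the same route as the paper: cite \cite{JankiewiczArtinRf, JankiewiczArtinSplittings} for the virtual splitting as an algebraically clean graph of free groups in the listed cases, apply Theorems~\ref{thm: main residually p}, \ref{thm: main good}, and \ref{thm:poly-free}, and then use that residual finiteness and cohomological goodness pass to finite-index overgroups (the paper cites \cite{GJ-ZZ08} for the latter, which is the same spectral-sequence argument you sketch). The only caveat is that you do not need the stated arithmetic conditions to be \emph{exactly} the locus where the clean splitting exists—the corollary only asserts a sufficient condition, so the one-directional implication from the cited papers suffices.
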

The Artin groups above were shown to virtually split as algebraically clean graphs of free groups in \cite{JankiewiczArtinRf, JankiewiczArtinSplittings}. We note that ``virtual'' in the above statement is necessary. Indeed, a group $G$ that is residually $p$-finite for all  primes $p$ is bi-orderable \cite{Rhemtulla73} (see also \cite{KoberdaSuciu20}), but the only bi-orderable Artin groups are right-angled Artin groups.  However, it is possible that each Artin groups listed above contains a finite index subgroup that is residually $p$-finite for all primes $p$.

Artin groups that are known to be poly-free are right-angled Artin groups \cite{DuchampKrob93, Howie99, HermillerSunic07}, even FC-type Artin groups \cite{BlascoGarciaMartinezPerezParis19}, and even large type Artin groups \cite{BlascoGarcia21}. Artin groups of types $A_n$, $B_n = C_n$, $D_n$, $F_4$, $G_2$ and $I_2(n)$ \cite{Brieskron73}, as well as $\widetilde A_n$, $\widetilde B_n$, $\widetilde C_n$, $\widetilde D_n$ \cite{Roushon2020} are known to be virtually poly-free.
Independently, Wu-Ye proved that all triangle Artin groups except $A_{23n}$ where $n$ is odd, are virtually poly-free \cite{WuYe2023}. Wu-Ye also show that some triangle Artin groups are not poly-free. \\

Finally, we also establish residual finiteness and cohomological goodness for all even Artin groups whose defining graphs contains no $4$-cliques.
\begin{Theorem}\label{thm:evenArtin}
Let $\Gamma$ be a finite labelled graph with all even labels that does not contain a $4$-clique. Then $A_{\Gamma}$ is residually finite and cohomologically good.
\end{Theorem}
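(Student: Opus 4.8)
The plan is to induct on the number of vertices of $\Gamma$, using the visual (parabolic) splittings of $A_\Gamma$ together with combination theorems for residual finiteness and cohomological goodness, and exploiting the retraction property special to even Artin groups. If $\Gamma$ is disconnected then $A_\Gamma$ is the free product of the Artin groups on its components, and free products of good, residually finite groups are good and residually finite (a free product is an amalgam over the trivial group, where efficiency is automatic), so I may assume $\Gamma$ is connected. If $\Gamma$ is complete it has at most three vertices, since it contains no $4$-clique, and then $A_\Gamma$ is one of $\{1\}$, $\mathbb Z$, an even dihedral Artin group, or an even triangle Artin group. These are the base cases: the first three are classical, and the even triangle Artin groups are residually finite and cohomologically good by Corollary~\ref{cor:Artin} together with the spherical cases $A_{22n}$ recorded above.

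For the inductive step suppose $\Gamma$ is connected but not complete, and pick a vertex $v$ that is not adjacent to every other vertex. Writing $\mathrm{st}(v)$ and $\Lk(v)$ for the full star and link of $v$, the standard visual decomposition of Artin groups gives
\[
A_\Gamma \;=\; A_{\mathrm{st}(v)}\ast_{A_{\Lk(v)}} A_{\Gamma\setminus v},
\]
in which all three subgraphs are proper full subgraphs of $\Gamma$, hence have strictly fewer vertices, still contain no $4$-clique, and still have all even labels. By the inductive hypothesis $A_{\mathrm{st}(v)}$, $A_{\Lk(v)}$ and $A_{\Gamma\setminus v}$ are residually finite and cohomologically good. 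The crucial feature I would use next is that \emph{even} Artin groups retract onto every parabolic subgroup: for a full subgraph $\Delta\subseteq\Gamma$ the assignment $s_i\mapsto s_i$ for $s_i\in\Delta$ and $s_i\mapsto 1$ otherwise is a well-defined homomorphism $A_\Gamma\to A_\Delta$ restricting to the identity on $A_\Delta$, precisely because each defining relation $(s_is_j)^{m}=(s_js_i)^{m}$ has even exponent and so becomes trivial once either generator is killed. In particular $A_{\Lk(v)}$ is a retract of the residually finite groups $A_{\mathrm{st}(v)}$ and $A_{\Gamma\setminus v}$.

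Since a retract $H$ of a residually finite group $G$ is separable and inherits the full profinite topology (the dualized retraction shows $\widehat H\hookrightarrow\widehat G$ is injective, whence $\widehat H\cap G=H$), the edge group $A_{\Lk(v)}$ is separable in each factor with the induced full profinite topology, so $\widehat{A_{\Lk(v)}}\hookrightarrow\widehat{A_{\mathrm{st}(v)}}$ and $\widehat{A_{\Lk(v)}}\hookrightarrow\widehat{A_{\Gamma\setminus v}}$. I can then form the proper profinite amalgam
\[
P \;=\; \widehat{A_{\mathrm{st}(v)}}\ast_{\widehat{A_{\Lk(v)}}}\widehat{A_{\Gamma\setminus v}},
\]
with injective structure maps, and consider the natural map $A_\Gamma\to P$. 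The combination theorem for residual finiteness of amalgams with separable edge groups shows this map is injective and identifies $P=\widehat{A_\Gamma}$, i.e.\ the splitting is efficient; hence $A_\Gamma$ is residually finite. For goodness I would then compare the Mayer--Vietoris sequence of the discrete amalgam with that of the profinite amalgam $\widehat{A_\Gamma}=P$: goodness of the three vertex and edge groups makes the comparison map an isomorphism on all of their cohomology terms, and the five lemma yields an isomorphism $H^\ast_{cont}(\widehat{A_\Gamma},M)\to H^\ast(A_\Gamma,M)$ for every finite module $M$, so $A_\Gamma$ is cohomologically good.

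I expect the main obstacle to be establishing efficiency rigorously, namely that $\widehat{A_\Gamma}$ really decomposes as the profinite amalgam $P$ and that $A_\Gamma$ embeds in it. This is exactly the step that fails for general graphs of free groups, where an amalgam of residually finite groups need not even be residually finite; here it is the even-ness of the labels, through the retraction onto parabolic subgroups, that forces the edge group $A_{\Lk(v)}$ to be a separable retract with induced full profinite topology and thereby rescues the combination theorems for both residual finiteness and goodness.
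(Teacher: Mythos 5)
Your overall strategy coincides with the paper's: induct on the defining graph, split $A_\Gamma$ visually as an amalgam whose edge group is a parabolic on an induced subgraph, and use the evenness of the labels to make that parabolic a retract of both vertex groups (the paper splits as $A_{\Gamma-\{u\}}\ast_{A_{\Gamma-\{u,v\}}}A_{\Gamma-\{v\}}$ over a non-edge $\{u,v\}$ and inducts on non-edges, while you use the star/link splitting and induct on vertices; this difference is cosmetic, and your base cases and retraction lemma agree with the paper's). The genuine gap is in your residual finiteness step: you reduce the retract property to ``separable with the full induced profinite topology'' and then invoke ``the combination theorem for residual finiteness of amalgams with separable edge groups.'' No such theorem exists, and the paper's own introduction supplies the counterexamples: the non-residually finite amalgams of free groups of Bhattacharjee and Wise \cite{Bhattacharjee94, Wise96Thesis}, and the simple Burger--Mozes amalgams \cite{BurgerMozes97}, are amalgams $F_1\ast_H F_2$ along finitely generated subgroups, and such subgroups are separable with the full induced profinite topology in each free factor (free groups are LERF, and by M.~Hall's theorem every finitely generated subgroup is a virtual retract, hence inherits the full topology). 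So separability plus full induced topology is strictly weaker than what you need; for the same reason it does not justify your assertion that the profinite amalgam $P$ is proper, i.e.\ that the completions of the vertex groups actually embed in $P$.

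What a common retract genuinely buys --- and what separability cannot --- is the ability to synchronize finite quotients of the two vertex groups along the edge group: given finite-index normal subgroups $N_1\unlhd A_{\mathrm{st}(v)}$ and $N_2\unlhd A_{\Gamma\setminus v}$, set $K=N_1\cap N_2\cap A_{\Lk(v)}$ and $M_i=N_i\cap r_i^{-1}(K)$, where $r_i$ are the retractions; then $M_i\cap A_{\Lk(v)}=K$ for both $i$, so $A_\Gamma$ maps to an amalgam of two finite groups over a common subgroup (which is virtually free, hence residually finite) in a way that, using separability, keeps each syllable of a nontrivial normal form outside the edge group. This is exactly Boler--Evans' theorem, which is what the paper cites at this step; your proof should invoke \cite{BolerEvans73} (or reproduce this compatibility argument) in place of the separability principle, and the same synchronization is what makes $P$ proper. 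Once residual finiteness is in hand, your efficiency and Mayer--Vietoris/five-lemma argument for goodness is sound --- it is essentially \cite[Prop 3.5]{GJ-ZZ08}, which the paper cites --- since the vertex and edge groups are themselves retracts of $A_\Gamma$ and hence closed with the full induced topology.
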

Such Artin groups are also poly-free by \cite{BlascoGarcia21, Wu22}.

\subsection*{Acknowledgements}
The first author was supported by the NSF grant DMS-2203307 and DMS-2238198. The second author was supported by the NSF grant DMS-2203325.

\section{Graphs of groups}
\subsection{Graph of groups notation} 
We recall the basic definitions and set the notation. 

A \emph{graph} $Y$ consists of a set $V(Y)$ of its \emph{vertices}, and a set $E(Y)$ of its \emph{edges}, and two maps:
\begin{enumerate}
\item $\overline{\cdot}:E(Y)\to E(Y)$ satisfying $\overline{\overline e} = e$, where we think of $\overline e$ as the edge $e$ with the orientation reversed,
\item $\tau: E(Y)\to V(Y)$, which we think of as taking the endpoint of an edge.
\end{enumerate}
A \emph{graph of groups} $\mathcal G$ with underlying graph $Y$ consists of a family of vertex groups $\{G_v\}_{v\in V(Y)}$ and edge groups $\{G_e\}_{e\in E(Y)}$ where $G_e = G_{\overline e}$ together with  maps $\{f_{e}:G_e\to G_{\tau(e)}\}_{e\in E(Y)}$.

Let $T\subseteq E(Y)$ be a set of edges of a spanning tree of $Y$.
The \emph{fundamental group} $\pi_1\mathcal G$ of the graph of groups $\mathcal G$ is constructed as the quotient 
\[
\pi_1\mathcal G = (\Asterisk_{v\in V(Y)} G_v  F(E(Y))/K
\]
where $K$ is a set of the following relations
\begin{enumerate}
\item $ef_e(g)\overline e = f_{\overline e}(g)$ for all $e\in E(Y)$ and $g \in G_e$, and
\item $\overline e = \inv{e}$, and $e = 1$ if and only if $e\in T$.
\end{enumerate}

\subsection{Algebraically clean graph of groups}
An \emph{algebraically clean graph of free groups} is a graph of groups $\mathcal G$ with finite underlying graph $Y$, where $G_v$ is a finite rank free group for all $v\in V(Y)$, $G_e$ is finite for all $e\in E(Y)$, and the maps $f_e:G_e\to G_{\tau(e)}$ are injective maps onto free factors.

Let $G$ be a group, and $N,M\subseteq G$ be two subgroups. We say that an isomorphism $\phi:N\to M$ is a \emph{partial automorphism}, if there exists an automorphism $\phi^{ext}:G\to G$ such that $\phi^{ext}_{|N} = \phi$. A \emph{partial identity} is a partial automorphism that can be extended to the identity. 

\begin{Proposition}\label{prop:multiple HNN}
Every algebraically clean graph of free groups $\mathcal G$ admits a splitting as an algebraically clean graph of groups $\mathcal G'$ where the underlying graph $Y'$ has a unique vertex, and up to renaming $e$ and $\overline e$, 
 $G_e \subseteq G_{\tau(e)}$ is a free factor, $f_e$ is the inclusion map, and $f_{\overline e}$ is a partial automorphism of $G_{\tau(e)}$.

\end{Proposition}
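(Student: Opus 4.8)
The plan is to collapse $\mathcal G$ to a single vertex by repeatedly collapsing edges of a spanning tree, checking that algebraic cleanness is preserved at each step, and then to convert the two free-factor inclusions attached to each resulting loop into an inclusion on one leg and a partial automorphism on the other.

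First I would induct on the number of vertices of the (connected) underlying graph $Y$. If $Y$ has more than one vertex, pick an edge $e_0$ of the spanning tree $T$; it joins distinct vertices $u=\tau(e_0)$ and $w=\tau(\overline{e_0})$. Collapsing $e_0$ merges these vertices and replaces $G_u,G_w$ by the amalgam $G_u*_{G_{e_0}}G_w$, giving a graph of groups $\mathcal G_1$ with one fewer vertex, one fewer edge, and the same fundamental group (collapsing a tree edge is merely a regrouping of the defining relations). Writing $G_u=C*A'$ and $G_w=C*B'$ with $C=f_{e_0}(G_{e_0})$ the common free factor, the new vertex group is $G_u*_C G_w\cong C*A'*B'$, again free of finite rank. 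To see that $\mathcal G_1$ is still algebraically clean, I would check that its edge maps remain inclusions onto free factors: for each oriented edge $e\neq e_0,\overline{e_0}$ with $\tau(e)\in\{u,w\}$ the image $f_e(G_e)$ is a free factor of $G_u$ or of $G_w$, and since each of $G_u,G_w$ is itself a free factor of $C*A'*B'$ (with complement $B'$, respectively $A'$) and a free factor of a free factor is again a free factor, the composite $f_e$ into the new vertex group is still an inclusion onto a free factor; edges with both endpoints outside $\{u,w\}$ are untouched. Iterating through all $|V(Y)|-1$ tree edges, I reach an algebraically clean $\mathcal G'$ with a single vertex $v$, free vertex group $G:=G_v$, and every surviving edge a loop at $v$ whose two maps $f_e,f_{\overline e}\colon G_e\to G$ are inclusions onto free factors of $G$.

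Finally I would build the partial automorphism. Fix a loop $e$, put $N=f_e(G_e)$ and $M=f_{\overline e}(G_e)$, both free factors of $G$, and consider the isomorphism $\phi=f_{\overline e}\circ f_e^{-1}\colon N\to M$. From $G=N*N'=M*M'$ and additivity of rank in free products, $\rank(N')=\rank(M')$, so $N'\cong M'$; extending $\phi$ by any isomorphism $N'\to M'$ defines an endomorphism of $G=N*N'$ sending a free basis to a free basis, hence an automorphism $\phi^{ext}$ of $G$ restricting to $\phi$. Thus, identifying $G_e$ with $N$ so that $f_e$ is the inclusion of the free factor $G_e\subseteq G$, the map $f_{\overline e}$ is a partial automorphism of $G$; the phrase ``up to renaming $e$ and $\overline e$'' records only the freedom to designate either leg as the inclusion.

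The main obstacle is the bookkeeping in the inductive step, namely guaranteeing that the free-factor property of every edge image survives each collapse. This is exactly where the elementary but essential facts that an amalgam of free groups over a common free factor is free, and that a free factor of a free factor is a free factor, are used, together with the freedom to reorder the free factors of a free group. Once cleanness is known to persist, the passage from two free-factor inclusions to an inclusion together with a partial automorphism is the clean extension lemma for isomorphisms between free factors of equal rank.
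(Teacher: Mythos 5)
Your proof is correct and follows essentially the same route as the paper: collapse a spanning tree so that the resulting vertex group is the tree product of the $G_v$ (an amalgam of free groups along common free factors, hence again free of finite rank), observe that a free factor of a free factor is a free factor, and realize each remaining loop's second edge map as a partial automorphism. The only differences are presentational: you collapse the tree one edge at a time rather than all at once, and you spell out the rank-counting argument showing that an isomorphism between equal-rank free factors of a finite rank free group extends to an automorphism --- a fact the paper asserts without proof (and records separately just before its proof of Theorem~\ref{t:main}).
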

\begin{proof} Consider a spanning tree $T$ in the underlying graph $Y$ of $\mathcal G$. 
We define a new graph $Y'$ to have the vertex set $V(Y') = \{T\}$ and edge set $E(Y') = \{e\in E(Y)\mid e \notin E(T)\}$.
Then by ``collapsing'' $T$ in $Y$, we can identify $\pi_1 \mathcal G$ with the fundamental group of a graph of groups $\mathcal G'$ with underlying graph $Y'$, where 
\begin{itemize}
\item $G_T = \Asterisk_{v\in T}G_v$,
\item for each edge $e\notin E(T)$, $G_e$ becomes identified with $f_e(G_e)\subseteq G_{\tau(e)}\subseteq G_T$ which is a free factor in $G_{\tau(e)}$ and therefore also in $G_T$, and the map $f_{\overline e}:G_{\overline{e}} = G_e\to G_{\tau(\overline e)}\subseteq G_T$ is an embedding onto some free factor of $G_{\tau(\overline e)}$ and again also a free factor of $G_T$. We can thus think of that map $f_{\overline e}$ as a partial automorphism of $G_T$.
\end{itemize}
\end{proof}

\section{Residual $p$-finiteness}
Throughout this section $p$ is a fixed prime.

\subsection{Well-known basics on residual $p$-finiteness}
We start with stating some easy facts that we will use later.
\begin{Lemma}\label{lem: res p}
\item 
\begin{enumerate}
\item Let $N\unlhd G$ be a subgroup whose index is a power of $p$. Then there exists a characteristic subgroup $K\unlhd G$ whose index is a power of $p$, such that $K\subseteq N$. 
\item Let $G$ fit in a short exact sequence
$$1 \to N\to G \to Q \to 1$$
where $Q$ is a finite $p$-group, and $N$ is residually $p$-finite. Then $G$ is residually $p$-finite.
\end{enumerate}
\end{Lemma}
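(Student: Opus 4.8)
The plan is to prove the two parts in order, using part (1) to streamline part (2). For part (1), I would build the desired $K$ as an intersection. Write $p^k = [G:N]$ and let $\mathcal N$ be the set of all normal subgroups $M \unlhd G$ with $[G:M] = p^k$. Since the fundamental group of an algebraically clean graph of free groups is finitely generated, $G$ has only finitely many subgroups of any fixed finite index, so $\mathcal N$ is finite. I would then set $K = \bigcap_{M \in \mathcal N} M$. Three checks finish this part: first, $N \in \mathcal N$, so $K \subseteq N$; second, any $\alpha \in \Aut(G)$ permutes $\mathcal N$ (it carries a normal subgroup of index $p^k$ to another one), hence fixes the intersection, so $K$ is characteristic; third, the diagonal map $G \to \prod_{M \in \mathcal N} G/M$ has kernel $K$, exhibiting $G/K$ as a subgroup of a finite product of $p$-groups, so $[G:K]$ is a power of $p$.

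For part (2), I would argue by cases on $g \in G \setminus \{1\}$. If $g \notin N$, then the composite $G \twoheadrightarrow Q$ already sends $g$ to a nontrivial element of the finite $p$-group $Q$, and we are done. The substantive case is $g \in N \setminus \{1\}$. Here residual $p$-finiteness of $N$ produces a normal subgroup $N_0 \unlhd N$ of $p$-power index with $g \notin N_0$. The difficulty is that $N_0$ need not be normal in $G$, so I would replace it by its normal core $K = \bigcap_{x \in G} x N_0 x^{-1}$. Since $[N : N_0]$ is finite, $N_0$ has finitely many $N$-conjugates, and since $[G:N] = |Q|$ is finite, choosing coset representatives shows $N_0$ has only finitely many $G$-conjugates; each is a subgroup of $N$ of index $[N:N_0]$, a power of $p$. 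As in part (1), the map $N \to \prod_x N/(xN_0x^{-1})$ shows $[N:K]$ is a power of $p$, and by construction $K \unlhd G$ and $K \subseteq N_0$, so $g \notin K$. Finally $G/K$ sits in a short exact sequence $1 \to N/K \to G/K \to Q \to 1$ with $N/K$ and $Q$ both finite $p$-groups, hence $G/K$ is a finite $p$-group in which $g$ survives.

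The main obstacle is the normality issue in part (2): the quotient $N \to P$ supplied by residual $p$-finiteness of $N$ lives only on $N$, and its kernel is not $G$-invariant, so one cannot immediately form a quotient of $G$. Passing to the normal core repairs this, and the crux is verifying that the core still has $p$-power index in $N$ — which rests on the finiteness of the conjugacy orbit of $N_0$, itself a consequence of both $[N:N_0]$ and $[G:N]$ being finite. (Alternatively, one may apply part (1) to $N$ directly to obtain a characteristic, hence $G$-normal, subgroup inside $N_0$, when $N$ is finitely generated; the core argument has the advantage of not requiring this.)
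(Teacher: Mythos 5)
Your proof is correct, and part (2) takes a genuinely different route from the paper. Part (1) is essentially the paper's argument: intersect all normal subgroups of index $[G:N]$, observe that automorphisms permute this family, and embed $G/K$ into the product of the quotients. The one real difference is that you make the finiteness of the family explicit, where the paper leaves it implicit; this is not cosmetic, since without some finiteness hypothesis part (1) is actually false --- for $G=\bigoplus_{i\in\mathbb{N}}\mathbb{Z}/p$ the only characteristic subgroups are $\{1\}$ and $G$, so no index-$p$ subgroup contains a finite-index characteristic subgroup --- and your appeal to finite generation (via the fact that finitely generated groups have finitely many subgroups of each finite index) is the right repair, even though the lemma as stated imposes no hypothesis on $G$. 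In part (2) you diverge: the paper applies part (1) to $N$, getting a characteristic subgroup $K\subseteq N_0$ of $p$-power index, which is then normal in $G$ with $[G:K]$ a $p$-power; you instead take the normal core of $N_0$ in $G$, using only the finiteness of $[G:N]$ to see that the core is a finite intersection of $p$-power-index normal subgroups of $N$. Your version is slightly longer but strictly more robust: it needs neither part (1) nor any finiteness property of $N$ itself, whereas the paper's part (2) silently inherits the finite-generation issue through its use of (1). In the paper's applications $N$ is always finitely generated (a finite-rank free group, or the fundamental group of a finite graph of finite groups), so both arguments suffice there, but your core argument proves part (2) in full generality.
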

\begin{proof}
\item 
\begin{enumerate}
\item 
Let $K$ be the intersection $\bigcap H$ of all the normal subgroups $H$ of $G$ of index $[G:N]$. 
Note that $K$ is also the kernel of a homomorphism $G\to \prod_{H} G/H$, since the order of each $G/H$ is a power of $p$, so is the order of $\prod_{H} G/H$. In particular, the index $[G:K]$ is a power of $p$.
\item Let $g\in G$. If $g$ survives in $Q$, then $Q$ is the required finite $p$-quotient of $G$.
Suppose $g\in N$. Since $N$ is residually $p$-finite, then using (1) we know that there exists a characteristic subgroup $K\subseteq N$ such that $g\notin K$ and whose index is a power of $p$. Since $K$ is characteristic in $N$, it is normal in $G$, and $[G:K]$ is a power of $p$. 
\end{enumerate}
\end{proof}

\subsection{Basics on lower central $p$-series}
Let $G$ be a finitely generated group. For subgroup $H,K\subseteq G$ we denote:
\begin{itemize}
\item $H^p = \langle h^p\mid h\in H\rangle$, 
\item $[H, K] =  \langle [h,k]\mid h\in H, k \in K\rangle$, and we use the convention that $[h,k] =hkh^{-1}k^{-1}$, 
\item $HK = \langle hk\mid h\in H, k\in K\rangle$.
\end{itemize}

Let $G$ be a finitely generated group. A \emph{filtration} of $G$ is a collection $(G_n)_{n\in \mathbb N}$ of subgroups of $G$ where $G_1 = G$, and $G_{n+1}\subseteq G_n$ for each $n\in \mathbb N$. A filtration $(G_n)_{n\in \mathbb N}$ is \emph{normal} if $G_n\unlhd G$ is normal for each $n\in \mathbb N$, and it is \emph{separating} if $\bigcap_{n\in \mathbb N} G_n = \{1\}$.

The \emph{lower $p$-central filtration} $\{\gamma^p_n(G)\}_n$ of $G$ is defined as:
\[
\gamma^p_1(G) := G, \qquad \gamma^p_{n+1}(G) := \left(\gamma^p_{n}(G)\right)^p[G, \gamma^p_{n}(G)].
\] 

We also denote $L_n^p(G) = \gamma^p_{n}(G)/\gamma^p_{n+1}(G)$. In particular, $L_1^p(G) = H_1(G, \mathbb F_p)$.  The lower $p$-central filtration of $G$ is a normal filtration, and it is separating if and only if $G$ is residually $p$-finite.
 We note a couple of basic well-known properties of the lower $p$-central series. For completeness, we provide proofs.

\begin{Lemma}\label{prop: lower p-central filtration basic}
\item
\begin{enumerate}
\item For each $n$ we have $(\gamma_n^p(G))^{p}\subseteq \gamma_{n+1}^p(G)$.
\item For each $n,m$ we have $[\gamma_m^p(G),\gamma_n^p(G)]\subseteq \gamma_{n+m}^p(G)$. 
\item Each $\gamma_n^p(G)$ is a characteristic subgroup of $G$. In particular, for each $i$ there are natural homomorphisms 
$\theta_n:\Aut(G)\to\Aut(L_n^p(G))$ and $\sigma_n:\Aut(G)\to\Aut(G/\gamma^p_n(G))$.
\end{enumerate}
\end{Lemma}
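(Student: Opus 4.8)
The plan is to prove the three parts in order, using each to feed the next, and freely using that the filtration is normal (so each $\gamma_n := \gamma_n^p(G)$ is normal in $G$), as already recorded above. Part (1) is immediate from the definition: $\gamma_{n+1}^p(G) = (\gamma_n)^p[G,\gamma_n]$ is by construction the subgroup \emph{generated} by $(\gamma_n)^p$ together with $[G,\gamma_n]$, so it contains $(\gamma_n)^p$.

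For Part (2) I would fix the statement $P(n)$: ``$[\gamma_m,\gamma_n]\subseteq\gamma_{m+n}$ for every $m$'', and induct on $n$. The base case $P(1)$ is the containment $[\gamma_m,G]\subseteq\gamma_{m+1}$, which is part of the definition of $\gamma_{m+1}$. For the inductive step I write $\gamma_{n+1}=(\gamma_n)^p[G,\gamma_n]$; since $N:=\gamma_{m+n+1}$ is normal, it suffices to bound the commutators of $\gamma_m$ with each of the two generating pieces of $\gamma_{n+1}$. For the piece $[G,\gamma_n]$ I apply the three-subgroup lemma modulo $N$ to the subgroups $G,\gamma_n,\gamma_m$: the two feeding containments $[[\gamma_n,\gamma_m],G]\subseteq N$ and $[[\gamma_m,G],\gamma_n]\subseteq N$ follow from $P(n)$ (giving $[\gamma_m,\gamma_n]\subseteq\gamma_{m+n}$ and $[\gamma_{m+1},\gamma_n]\subseteq\gamma_{m+n+1}$) together with the defining inclusion $[\gamma_k,G]\subseteq\gamma_{k+1}$, and the lemma then yields $[[G,\gamma_n],\gamma_m]\subseteq\gamma_{m+n+1}$.

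The main obstacle is the $p$-th power piece $(\gamma_n)^p$, where the prime genuinely enters. Here I use the commutator identity $[x,h^p]=\prod_{i=0}^{p-1}{}^{h^i}[x,h]$ for $x\in\gamma_m$ and $h\in\gamma_n$. By $P(n)$ the element $c:=[x,h]$ lies in $\gamma_{m+n}$, and conjugation alters it only by a commutator, ${}^{h^i}c = c\cdot[c^{-1},h^i]$ with $[c^{-1},h^i]\in[\gamma_{m+n},\gamma_n]\subseteq\gamma_{m+2n}\subseteq\gamma_{m+n+1}$ (using $P(n)$ again and $n\geq 1$); hence each factor is congruent to $[x,h]$ modulo $N$. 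Therefore $[x,h^p]\equiv[x,h]^p\pmod{\gamma_{m+n+1}}$, and $[x,h]^p\in(\gamma_{m+n})^p\subseteq\gamma_{m+n+1}$ by Part (1), so $[x,h^p]\in N$. A routine generation argument — for fixed $x$ the set of $b$ with $[x,b]\in N$ is a subgroup containing all generators $h^p$ — upgrades this to $[\gamma_m,(\gamma_n)^p]\subseteq N$, completing $P(n+1)$.

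Part (3) follows by a parallel induction: $\gamma_1=G$ is characteristic, and if $\gamma_n$ is characteristic then for any $\phi\in\Aut(G)$ one has $\phi((\gamma_n)^p)=(\phi(\gamma_n))^p=(\gamma_n)^p$ and $\phi([G,\gamma_n])=[\phi(G),\phi(\gamma_n)]=[G,\gamma_n]$, so $\phi$ preserves $\gamma_{n+1}$. Since $\gamma_n$ and $\gamma_{n+1}$ are then both characteristic, each $\phi\in\Aut(G)$ descends to automorphisms of $G/\gamma_n$ and of $L_n^p(G)=\gamma_n/\gamma_{n+1}$, and these assignments respect composition and identity, yielding the homomorphisms $\sigma_n$ and $\theta_n$.
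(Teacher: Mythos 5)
Your proposal is correct and takes essentially the same approach as the paper: part (1) is read off from the definition, part (2) is an induction that splits one term as $(\gamma_n^p(G))^p[G,\gamma_n^p(G)]$, handles the commutator piece with the three-subgroup lemma, and handles the $p$-th power piece by showing that a commutator with a $p$-th power is congruent to the $p$-th power of the commutator modulo the deeper term and then invoking part (1), and part (3) is the same induction showing each $\gamma_n^p(G)$ is characteristic. The only cosmetic differences are that you induct on the second slot of the commutator rather than the first (equivalent since $[A,B]=[B,A]$ for subgroups) and that you organize the power computation via the identity $[x,h^p]=\prod_{i}{}^{h^i}[x,h]$ instead of the paper's expansion of $(hkh^{-1}k^{-1})^p$.
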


\begin{proof}\item
\begin{enumerate}
\item Follows immediately from the definition.
\item We induct on $m$. For $m = 1$ the statement follows directly from the definition for every $n$. Suppose that $[\gamma_{m-1}^p(G),\gamma_n^p(G)]\subseteq \gamma_{n+m-1}^p(G)$ for every $n$. 


First we claim that $[(\gamma_{m-1}^p(G))^p, \gamma_n^p(G)]\subseteq \gamma_{n+m}^p(G)$.
Given $k\in \gamma_{n}^p(G)$ and $h\in\gamma_{m-1}^p(G)$ we need to show that $[h^p, k]\in\gamma_{n+m}^p(G)$. First note that $[h^p, k] = h^pu^p$ where $u = k\inv{h}\inv{k}$.
By the inductive assumption $u = \inv{h}\ell$ for some $\ell\in\gamma_{n+m-1}^p(F)$.

We have
\begin{align*}
[h, k]^p &= (hk\inv{h}\inv{k})^p\\
& = h^p(h^{-(p-1)}uh^{(p-1)})(h^{-(p-2)}uh^{(p-2)})\dots (\inv{h}uh)u.
\end{align*}
By substituting $u = \inv{h}\ell$ we get
\begin{align*}
h^{-(p-i)}uh^{(p-i)} = h^{-(p-i)}\inv{h}\ell h^{(p-i)} = \inv{h} h^{-(p-i)}\ell h^{(p-i)} =  \inv{h}\ell\ell_{i} = u\ell_i
\end{align*}
for some $\ell_{i}\in \gamma_{n+m}$ where the equality $h^{-(p-i)}\ell h^{(p-i)} = \ell\ell_{i}$ follows from $[G, \gamma_{n+m-1}]\subseteq \gamma_{n+m}$.
Thus we have
\begin{align*}
[h, k]^p  = h^p u\ell_{p-1}u\ell_{p-2}\dots, u\ell_1u \in (h^pu^p)\gamma_{n+m}^p(G),
\end{align*}
and in particular $[h^p, k]\gamma^p_{n+m}(G) =  (h^pu^p)\gamma_{n+m}^p(G) = [h, k]^p\gamma_{n+m}^p(G) $.
Since $[h,k]\in \gamma_{n+m-1}^p(G)$ by induction, we have $[h,k]^p\in\gamma_{n+m}^p(G)$ by Lemma~\ref{prop: lower p-central filtration basic}(1). 
We conclude that $[h^p,k]\in\gamma_{n+m}^p(G)$, as claimed.

Now we claim that $[[G,\gamma_{m-1}^p(G)], \gamma_{n}^p(G)]\subseteq \gamma_{n+m}^p(G)$.
By the three subgroup lemma (see e.g.\ \cite[Cor 8.28]{Isaacs2009})
\begin{align*}
[[G,\gamma_{m-1}^p(G)], \gamma_n^p(G)] 
&\subseteq [[\gamma_{m-1}^p(G), \gamma_n^p(G)],G]\cdot [[G, \gamma_n^p(G)], \gamma_{m-1}^p(G)]\\
&\subseteq [\gamma_{n+m-1}^p(G), G] \cdot [\gamma_{n+1}^p(G),  \gamma_{m-1}^p(G)]\\
&\subseteq \gamma_{n+m}^p(G)
\end{align*}
and the second and third line follow from the inductive hypothesis.
Thus we conclude that $[\gamma_{m}^p(G),\gamma_{n}^p(G)] = [(\gamma_{m-1}^p(G))^p[\gamma_{m-1}^p(G), G],\gamma_{n}^p(G)]\subseteq \gamma_{n+m}^p(G)$, as desired.

\item We now induct on $n$. For $n=1$, clearly $\gamma_1^p(G) = G$ is characteristic in $G$. We assume that the statements in true for $n-1$ and prove it for $n$. Let $h \in  \gamma_n^p(G) = \gamma_{n-1}^p(G)[G, \gamma_{n-1}^p(G)]$, i.e.\ $h = h_1^p\cdot [k,h_2]$ where $h_1, h_2\in \gamma_{n-1}^p(G)$ and $k\in G$. Let $\phi\in \Aut(G)$. Then
$$
\phi(h) = \phi(h_1^p\cdot [k,h_2]) = \phi(h_1)^p \cdot [\phi(k), \phi(h_2)].
$$
Since $\gamma_{n-1}^p(G)$ is characteristic, $\phi(h_1),\phi(h_2) \in \gamma_{n-1}^p(G)$, so $\phi(h) \in \gamma_{n-1}^p(G)[G, \gamma_{n-1}^p(G)] = \gamma_{n}^p(G)$. Thus $ \gamma_{n}^p(G)$ is characteristic.

Since $\gamma_{n}^p(G)$ is characteristic in $G$, every automorphism $\phi:G\to G$ preserves $\gamma_{n}^p(G)$, and therefore $\sigma_n(\phi):G/\gamma_{n}^p(G) \to G/\gamma_{n}^p(G)$ is well-defined. It is clear that $\sigma_n$ is a homomorphism. 
The automorphism $\phi$ restricts to $\phi_{|\gamma_{n}^p(G)}: \gamma_{n}^p(G)\to \gamma_{n}^p(G)$, and to $\phi_{|\gamma_{n}^p(G)}:\gamma_{n+1}^p(G)\to \gamma_{n+1}^p(G)$. Thus $\phi$ descends to a well-defined automorphism of $L_n^p(G)$. The map $\theta_n$ is clearly a homomorphism. 
\end{enumerate}
\end{proof}

\begin{Proposition}[{\cite[Chap VIII.1]{HuppertBlackburnBookII}}]
\label{prop: trivial action on L1}
Let $\phi \in \Aut(G)$ such that $\theta_1(\phi) = \Id_{L_{1}^p(G)}$. 
\begin{enumerate}
\item We have $\theta_n(\phi) = \Id_{L_{n}^p(G)}$ for all $n$. 
\item The order of $\sigma_n(\phi)$ is a power of $p$.
\end{enumerate} 
\end{Proposition}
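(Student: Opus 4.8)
The plan is to prove the two parts in order, building the second on the first and on the structure of the associated graded determined by the lower $p$-central filtration.

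For part (1) I would induct on $n$, the base case $n=1$ being exactly the hypothesis $\theta_1(\phi)=\Id$. For the inductive step I would exploit the defining relation $\gamma_n^p(G)=(\gamma_{n-1}^p(G))^p\,[G,\gamma_{n-1}^p(G)]$, which shows that $L_n^p(G)$ is generated, as an $\mathbb F_p$-vector space, by the images of the $p$-th powers $h^p$ with $h\in\gamma_{n-1}^p(G)$ together with the images of the commutators $[g,h]$ with $g\in G$ and $h\in\gamma_{n-1}^p(G)$. It then suffices to check that $\phi$ fixes each such generator modulo $\gamma_{n+1}^p(G)$. For a commutator $[g,h]$, writing $\phi(g)=ga$ with $a\in\gamma_2^p(G)$ and $\phi(h)=hb$ with $b\in\gamma_n^p(G)$ (available from the inductive hypothesis applied to $L_1^p(G)$ and $L_{n-1}^p(G)$), the commutator identities together with Lemma~\ref{prop: lower p-central filtration basic}(2) give $[\phi(g),\phi(h)]\equiv[g,h]\pmod{\gamma_{n+1}^p(G)}$; in other words the commutator descends to a well-defined bilinear bracket $L_1^p(G)\times L_{n-1}^p(G)\to L_n^p(G)$ on which $\phi$ acts trivially. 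For a $p$-th power $h^p$, writing $\phi(h)=hu$ with $u\in\gamma_n^p(G)$ and expanding $(hu)^p=h^p(h^{-(p-1)}uh^{p-1})\cdots(h^{-1}uh)u$ as in the proof of Lemma~\ref{prop: lower p-central filtration basic}, each conjugate satisfies $h^{-i}uh^i\equiv u\pmod{\gamma_{n+1}^p(G)}$ since $[G,\gamma_n^p(G)]\subseteq\gamma_{n+1}^p(G)$; as $L_n^p(G)$ has exponent $p$ the product of the $p$ conjugates is trivial in $L_n^p(G)$, whence $\phi(h^p)=(hu)^p\equiv h^p$. This closes the induction.

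For part (2), first note that $P:=G/\gamma_n^p(G)$ is a finite $p$-group (each $L_k^p(G)$ is a finite-dimensional $\mathbb F_p$-space), and set $\psi:=\sigma_n(\phi)\in\Aut(P)$. By part (1), $\psi$ acts trivially on every quotient $\gamma_k^p(G)/\gamma_{k+1}^p(G)$, i.e.\ $g^{-1}\psi(g)\in\gamma_{k+1}^p(G)$ for all $g\in\gamma_k^p(G)$. The key point is that passing from an automorphism to its $p$-th power improves this ``displacement by one filtration step''. Concretely, I would prove by induction on $j$ that $g^{-1}\psi^{p^j}(g)\in\gamma_{k+1+j}^p(G)$ for every $k$ and every $g\in\gamma_k^p(G)$. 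Granting the statement for $\beta:=\psi^{p^j}$, one writes $c:=g^{-1}\beta(g)\in\gamma_{k+1+j}^p(G)$ and shows $\beta^i(g)\equiv gc^i$ modulo the next filtration term, using that $\beta$ moves $c$ and the error terms deeper into the filtration (again via Lemma~\ref{prop: lower p-central filtration basic}(2)); taking $i=p$ and invoking $(\gamma_{k+1+j}^p(G))^p\subseteq\gamma_{k+2+j}^p(G)$ from Lemma~\ref{prop: lower p-central filtration basic}(1) gives $g^{-1}\beta^p(g)\in\gamma_{k+2+j}^p(G)$, which is the claim for $j+1$. Taking $k=1$ and $j$ large enough that $\gamma_{2+j}^p(G)$ is trivial in $P$ (i.e.\ $j\ge n-2$) yields $\psi^{p^{n-2}}=\Id_P$, so the order of $\psi$ is a power of $p$.

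I expect the main obstacle to be the bookkeeping in the inductive step of part (2): the group $P$ is non-abelian, so the congruence $\beta^p(g)\equiv gc^p$ must be established by carefully pushing all commutator corrections into a deeper term of the lower $p$-central filtration, which is exactly where Lemma~\ref{prop: lower p-central filtration basic}(1)--(2) and the exponent-$p$ property of the graded pieces are used. Alternatively, part (2) can be obtained more quickly from Burnside's basis theorem, since the kernel of $\Aut(P)\to\Aut(P/\Phi(P))$ is a $p$-group and $P/\Phi(P)=L_1^p(G)$, on which $\psi$ is trivial by hypothesis; but the inductive argument above is self-contained and reuses part (1), which fits the proposition's two-part structure.
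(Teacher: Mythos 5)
Your proposal is correct and follows essentially the same route as the paper: part (1) by induction on $n$, checking the induced action on the $p$-th power and commutator generators of $\gamma_n^p(G)$ modulo $\gamma_{n+1}^p(G)$ via Lemma~\ref{prop: lower p-central filtration basic}(1)--(2), and part (2) by showing that each $p$-th power of the automorphism pushes displacements one step deeper into the filtration. Your bookkeeping in part (2) (induction on the exponent $j$, uniformly over all filtration levels, giving $\sigma_n(\phi)^{p^{n-2}}=\Id$) differs only cosmetically from the paper's induction on $n$ (which gives $\sigma_n(\phi^{p^{n-1}})=\Id$), and the Hall--Burnside shortcut you mention at the end is a legitimate alternative that the paper does not invoke.
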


\begin{proof}
\begin{enumerate}
\item
We induct on $n$. The case of $n=1$ is immediate. We assume that the statement holds for $n-1$.

Let first $h \in \gamma_{n-1}^p(G)$. Then by assumption $\phi(h) = hk_n$ where $k_n\in \gamma_n^p(G)$. We have
\begin{align*}
\phi(h^p \,\gamma_{n+1}^p(G)) &= (hk_n)^p \,\gamma_{n+1}^p(G)\\
& = h^p (h^{-(p-1)}k_nh^{p-1})(h^{-(p-2)}k_nh^{p-2})\cdots (h^{-1}k_nh)k_i \,\gamma_{n+1}^p(G) \\
&= h^p k_n\ell_{p-1}k_n\ell_{p-2}\cdots k_n\ell_1k_n \,\gamma_{n+1}^p(G)\\
&= h^pk_n^p (k_n^{-(p-1)}\ell_{p-1}k_n^{p-1})(k_n^{-(p-2)}\ell_{p-2}k_n^{p-2}) \dots (k_n^{-1}\ell_1k_n)\, \gamma_{n+1}^p(G)\\
&=h^p k_n^p \,\gamma_{n+1}^p(G) \\
&= h^p \,\gamma_{n+1}^p(G)
\end{align*}
where $\ell_1, \dots, \ell_{p-1}, \ell, \ell'$ are some elements of $\gamma_{n+1}^p(G)$. Indeed, the fact that $h^{-j}k_n h^{j} = k_n\ell_j$ follows from the fact that $[\gamma_{n-1}^p(G), \gamma_n^p(G)]\subseteq [G, \gamma_n^p(G)]\subseteq  \gamma_{n+1}^p(G)$ (Proposition~\ref{prop: lower p-central filtration basic}(2)). Finally, the fact that $k_n^p\in \gamma_{n+1}^p$ follows from Proposition~\ref{prop: lower p-central filtration basic}(1).

Now let $g\in G$. Then $\phi(g) = g\ell_2$ for some $\ell_2\in \gamma_2^p(G)$. We have
\begin{align*}
\phi([g,h]\,\gamma_{n+1}^p(G))  &= [g\ell_2, hk_n] \,\gamma_{n+1}^p(G)\\
&= g(\ell_2hk_n\inv{\ell_2})\inv{g}\inv{k_n}\inv{h}\,\gamma_{n+1}^p(G) \\
&= gh(k_{n}d_{n+1} \inv{g} \inv{k_n})\inv{h}\,\gamma_{n+1}^p(G)\\
&= gh d_{n+1}\inv{g}d_{n+1}' \inv{h}\,\gamma_{n+1}^p(G)\\
&= gh\inv{g}\inv{h}\,\gamma_{n+1}^p(G)\\
& = [g,h]\,\gamma_{n+1}^p(G)
\end{align*}
where we used Proposition~\ref{prop: lower p-central filtration basic}(2) to write $\ell_2hk_n\inv{\ell_2} = hk_nd_{n+1}$ for some $d_{n+1}\in \gamma_{n+1}^p(G)$ since $[\gamma_2^p(G), \gamma_{n-1}^p(G)]\subseteq \gamma_{n+1}^p(G)$, and $k_{n}d_{n+1} \inv{g} \inv{k_n} = d_{n+1}\inv{g}d_{n+1}'$ for some $d'_{n+1}\in \gamma_{n+1}^p(G)$ since $[\gamma_n^p(G), G]\subseteq \gamma_{n+1}^p(G)$.

Finally, every generator $h$ of $\gamma_n^p(G)$ is of the form $h = h_1^p [g, h_2]$ for some $h_1, h_2\in \gamma_{n-1}^p(G)$. We have
\begin{align*}
 \phi(h\gamma_{n+1}^p(G)) = \phi(h_1^p) \phi([g, h_2])\gamma_{n+1}^p(G) = h_1^p [g, h_2] \gamma_{n+1}^p(G).
 \end{align*}
 for some $\ell \in \gamma_{i+1}^p(G)$. This proves that $\theta_i(\phi) = \Id_{L_i^p(G)}$ as claimed.

\item We prove by induction on $n$ that $\sigma_n(\phi^{p^{n-1}}) = \Id_{G/\gamma^p_n(G))}$. The case of $n=1$ is immediate as $\sigma_1 =\theta_1$. We assume that the statement holds for $n-1$, in particular for every $h \in G$ we have $\phi^{p^{n-2}}(h\gamma_{n-1}^p(G)) = h\gamma_{n-1}^p(G)$, i.e.\ there exists $k\in \gamma_{n-1}^p(G)$ such that $\phi^{p^{n-2}}(h) = hk$. We have
\begin{align*}
\phi^{p^{n-1}}(h) = (\phi^{p^{n-2}})^{p}(h) = (\phi^{p^{n-2}})^{p-1}\left( \phi^{p^{n-2}}(h)\right)  
= (\phi^{p^{n-2}})^{p-1}(hk).  
\end{align*}

Since $k\in \gamma_{n-1}^p(G)$ and $\theta_n(\phi^{p^{n-2}}) = \theta_n(\phi)^{p^{n-2}} = \Id_{L_{n}^p(G)}$, using Proposition~\ref{prop: trivial action on L1}(1), we get
\begin{align*}
(\phi^{p^{n-2}})^{p-1}(hk) &= (\phi^{p^{n-2}})^{p-2}(\phi^{p^{n-2}}(h)\phi^{p^{n-2}}(k))\\ &\in (\phi^{p^{n-2}})^{p-2}(hk\cdot k\gamma_{n}^p(G))\\
 &= (\phi^{p^{n-2}})^{p-3}(hk^3\gamma_{n}^p(G)) =  \dots = hk^p\gamma_{n}^p(G).
\end{align*}
In particular, since $k^p\in \gamma_{n}^p(G)$ by Proposition~\ref{prop: lower p-central filtration basic}(1), we conclude $\phi^{p^{n-1}}(h) \in h\gamma_{n}^p(G)$ as required.
\end{enumerate}
\end{proof}

\begin{Corollary}\label{cor: trivial action of a subgroup}

 Let  $A\subseteq \Aut(G)$ be a subgroup.
\begin{enumerate}
\item The image $\theta_1(A) \subseteq \Aut(L_1^p(G))$ is a finite $p$-group if and only if  $\theta_n(A) \subseteq \Aut(L_n^p(G))$ is a finite $p$-group for every $n\geq 1$.
\item The image $\theta_1(A) \subseteq \Aut(G/\gamma_1^p(G))$ is a finite $p$-group if and only if  $\sigma_n(A) \subseteq \Aut(G/\gamma_n^p(G))$ is a finite $p$-group for every $n\geq 1$.
\end{enumerate}
\end{Corollary}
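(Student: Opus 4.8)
The plan is to reduce both statements to Proposition~\ref{prop: trivial action on L1}. I first note that, since $G$ is finitely generated, each $L_n^p(G)$ is a finite-dimensional $\mathbb F_p$-vector space: it is abelian and of exponent $p$ by Proposition~\ref{prop: lower p-central filtration basic}(1)--(2), and finitely generated because $G$ is. Hence each $G/\gamma_n^p(G)$ is a finite $p$-group, so $\Aut(L_n^p(G))$ and $\Aut(G/\gamma_n^p(G))$ are finite groups and the images $\theta_n(A),\sigma_n(A)$ are automatically finite. Thus in both (1) and (2) the only content is the assertion ``$p$-group.'' In each part the backward implication is then immediate: for (1) take $n=1$, and for (2) take $n=2$ and use the identification $L_1^p(G)=G/\gamma_2^p(G)$, under which $\sigma_2=\theta_1$.

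For the forward direction of (1), I would set $A_0:=A\cap\ker\theta_1$. By Proposition~\ref{prop: trivial action on L1}(1) every $\phi\in A_0$ satisfies $\theta_n(\phi)=\Id$ for all $n$, so $A_0\subseteq\ker(\theta_n|_A)$ and $\theta_n$ factors through $A/A_0\cong\theta_1(A)$. Therefore $\theta_n(A)$ is a homomorphic image of the finite $p$-group $\theta_1(A)$, and is itself a finite $p$-group.

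The forward direction of (2) is the main point, and I expect it to be the principal obstacle: the factoring trick from (1) fails because Proposition~\ref{prop: trivial action on L1}(2) only guarantees that $\sigma_n(\phi)$ has $p$-power \emph{order} for $\phi\in A_0$, not that it is trivial, so $A_0$ need not lie in $\ker(\sigma_n|_A)$. To get around this I would use that $\gamma_2^p(G)/\gamma_n^p(G)$ is the Frattini subgroup $P^p[P,P]$ of $P:=G/\gamma_n^p(G)$, hence characteristic, so every automorphism in $\sigma_n(\Aut(G))$ preserves it and descends to an automorphism of $P/(\gamma_2^p(G)/\gamma_n^p(G))=G/\gamma_2^p(G)=L_1^p(G)$. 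This yields a restriction homomorphism $r$ with $r\circ\sigma_n=\theta_1$, and hence a short exact sequence
\[
1\longrightarrow K\longrightarrow \sigma_n(A)\overset{r}{\longrightarrow}\theta_1(A)\longrightarrow 1,
\qquad K=\sigma_n(A_0)=\sigma_n(A)\cap\ker r.
\]
Here $\theta_1(A)$ is a finite $p$-group by hypothesis, and every element of $K$ has the form $\sigma_n(\phi)$ with $\phi\in A_0$, hence $p$-power order by Proposition~\ref{prop: trivial action on L1}(2); since $K$ is finite, Cauchy's theorem forces $K$ to be a $p$-group. As an extension of a finite $p$-group by a finite $p$-group, $\sigma_n(A)$ is a finite $p$-group, which completes (2). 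The one step needing genuine care is the identity $K=\sigma_n(A)\cap\ker r$ together with the application of Proposition~\ref{prop: trivial action on L1}(2); the rest is bookkeeping with characteristic subgroups and quotients of $p$-groups.
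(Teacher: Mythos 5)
Your proposal is correct, and its two halves relate to the paper differently. Part (1) is exactly the paper's argument: by Proposition~\ref{prop: trivial action on L1}(1), $A\cap\ker\theta_1\subseteq\ker(\theta_n|_A)$, so $\theta_n(A)$ is a quotient of the finite $p$-group $\theta_1(A)$. Your diagnosis that this factoring trick breaks for (2) is also exactly right, but the paper repairs it differently than you do: the paper argues element by element, noting that for each $\phi\in A$ the hypothesis gives $k$ with $\theta_1(\phi^{p^k})=\Id_{L_1^p(G)}$, whence Proposition~\ref{prop: trivial action on L1}(2) makes $\sigma_n(\phi)^{p^k}=\sigma_n(\phi^{p^k})$ of $p$-power order and hence $\sigma_n(\phi)$ itself of $p$-power order; since $\sigma_n(A)$ sits inside the automorphism group of the finite group $G/\gamma_n^p(G)$, it is a finite group all of whose elements have $p$-power order, i.e.\ a finite $p$-group. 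You instead build structure: the observation that $\gamma_2^p(G)/\gamma_n^p(G)$ is the Frattini subgroup of $G/\gamma_n^p(G)$ gives a homomorphism $r$ with $r\circ\sigma_n=\theta_1$, hence the extension $1\to K\to\sigma_n(A)\to\theta_1(A)\to 1$ with $K=\sigma_n(A\cap\ker\theta_1)$, and you apply Proposition~\ref{prop: trivial action on L1}(2) and Cauchy only to $K$. Both routes consume the same inputs (Proposition~\ref{prop: trivial action on L1}(2), finiteness of $\Aut(G/\gamma_n^p(G))$, Cauchy); the paper's is shorter because raising $\phi$ to the power $p^k$ eliminates any need for the compatibility map $r$, while yours makes visible exactly where the hypothesis on $\theta_1(A)$ enters and never has to manipulate powers of arbitrary elements of $A$. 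A further small merit of your write-up: you silently correct the statement's typo ``$\theta_1(A)\subseteq\Aut(G/\gamma_1^p(G))$'' to $\Aut(L_1^p(G))\cong\Aut(G/\gamma_2^p(G))$, and your backward implication in (2) via $n=2$ and the identification $\sigma_2=\theta_1$ is cleaner than the paper's corresponding line, which loosely writes $\sigma_1(A)=\theta_1(A)$ even though $G/\gamma_1^p(G)$ is trivial.
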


\begin{proof}
\item 
\begin{enumerate} 
\item
Fix $n\geq 1$, and suppose that $\theta_1(A)$ is a finite $p$-group. By Proposition~\ref{prop: trivial action on L1}(1) for every $\phi\in A$, if $\theta_1(\phi) = \Id_{L_1^p(G)}$, then  $\theta_n(\phi) = \Id_{L_i^p(G)}$. Thus $\theta_i(A)$ is a quotient of $\theta_1(A)$. In particular, $\theta_i(A)$ is a finite $p$-group as required.
\item Fix $n\geq 1$, and suppose that $\sigma_1(A) = \theta_1(A)$ is a finite $p$-group. For every $\phi\in A$ there exists $k\geq 1$ such that $\theta_1(\phi^{p^k}) = \theta_1(\phi)^{p^k}  =  \Id_{L_1^p(G)}$. By Proposition~\ref{prop: trivial action on L1}(2) the order of $\sigma_n(\phi^{p^k})$ is a power of $p$, and therefore the order of $\sigma_n(\phi)$ is a power of $p$. We conclude that $\sigma_n(A)$ is a $p$-group. As a subgroup of the automorphism group of a finite group $\sigma_n(A)$ is also finite.
\end{enumerate}
\end{proof}

The above corollary implies that for every subgroup $K$ of  $\ker(\Aut(G)\to \Aut(L_1^p(G))$ (which has finite index in $\Aut(G)$), all the images $\sigma_i(K)$ are finite $p$-groups. This observation is crucial in the proof of Theorem~\ref{thm: main residually p}.


We note the following observation that the operators $\theta_i$ and $\sigma_i$ can be extended to partial automorphisms.

\begin{Lemma}\label{lem: partial automorphisms descend}
A partial automorphism $\phi:N\to M$ induces a partial automorphism $N\cap \gamma_{n}^p(G)) \to M \cap \gamma_{n}^p(G)$ of $\gamma_n^p(G)$. In particular, it descends to the following partial automorphisms
\begin{enumerate}
\item $\sigma_n(\phi):N /N\cap \gamma_{n}^p(G) \to M/M\cap \gamma_{n}^p(G)$, and
\item $\theta_n(\phi):N\cap \gamma_{n}^p(G) /N\cap \gamma_{n+1}^p(G) \to M\cap \gamma_{n}^p(G)/M\cap \gamma_{n+1}^p(G)$.
\end{enumerate}
\end{Lemma}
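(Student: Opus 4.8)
The plan is to exploit the defining feature of a partial automorphism — that $\phi\colon N\to M$ extends to a genuine automorphism $\phi^{ext}\in\Aut(G)$ — together with the fact, established in Proposition~\ref{prop: lower p-central filtration basic}(3), that each $\gamma_n^p(G)$ is characteristic in $G$. Since an automorphism preserves every characteristic subgroup, $\phi^{ext}(\gamma_n^p(G)) = \gamma_n^p(G)$ for all $n$, and this single observation drives the whole argument.

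First I would establish the restriction claim. For $x\in N\cap\gamma_n^p(G)$ we have $\phi(x) = \phi^{ext}(x)$ because $x\in N$; this element lies in $M$ since $\phi(N)=M$, and in $\gamma_n^p(G)$ since $\phi^{ext}$ fixes this characteristic subgroup. Hence $\phi$ carries $N\cap\gamma_n^p(G)$ into $M\cap\gamma_n^p(G)$. Running the same argument for $\phi^{-1} = (\phi^{ext})^{-1}|_M$, which extends to the automorphism $(\phi^{ext})^{-1}$, yields the reverse inclusion, so the restriction is an isomorphism $N\cap\gamma_n^p(G)\to M\cap\gamma_n^p(G)$. It is moreover a partial automorphism \emph{of} $\gamma_n^p(G)$, witnessed by the automorphism $\phi^{ext}|_{\gamma_n^p(G)}$ of $\gamma_n^p(G)$.

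Next I would descend to the two quotients. The key point is that the induced maps depend only on $\phi$, not on the chosen extension: the assignment $x(N\cap\gamma_n^p(G))\mapsto \phi(x)(M\cap\gamma_n^p(G))$ refers only to $\phi(x)$ for $x\in N$, on which every extension agrees. Via the second isomorphism theorem I would identify $N/(N\cap\gamma_n^p(G))$ with the subgroup $N\gamma_n^p(G)/\gamma_n^p(G)\subseteq G/\gamma_n^p(G)$, and likewise for $M$; under this identification the automorphism $\sigma_n(\phi^{ext})$ of $G/\gamma_n^p(G)$ (which exists by Proposition~\ref{prop: lower p-central filtration basic}(3)) restricts to the map in question, certifying it as a partial automorphism and giving (1). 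For (2) I would repeat the bookkeeping one level down: since $\phi^{ext}$ preserves both $\gamma_n^p(G)$ and $\gamma_{n+1}^p(G)$, the restriction descends to $(N\cap\gamma_n^p(G))/(N\cap\gamma_{n+1}^p(G))$, which embeds into $L_n^p(G)=\gamma_n^p(G)/\gamma_{n+1}^p(G)$ because $N\cap\gamma_{n+1}^p(G) = (N\cap\gamma_n^p(G))\cap\gamma_{n+1}^p(G)$; here $\theta_n(\phi^{ext})\in\Aut(L_n^p(G))$ serves as the extending automorphism.

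There is no serious obstacle; the content is entirely formal once the characteristic-ness of $\gamma_n^p(G)$ is in hand. The only points needing care are (i) verifying that the restricted and descended maps are independent of the choice of $\phi^{ext}$, which holds because all extensions agree with $\phi$ on $N$, and (ii) correctly realizing the relevant quotients as subgroups of $G/\gamma_n^p(G)$ and $L_n^p(G)$ through the second isomorphism theorem, so that the ambient automorphisms $\sigma_n(\phi^{ext})$ and $\theta_n(\phi^{ext})$ visibly restrict to them.
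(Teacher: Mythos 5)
Your proposal is correct and follows essentially the same route as the paper's proof: invoke the extension $\phi^{ext}\in\Aut(G)$, use that each $\gamma_n^p(G)$ is characteristic (Proposition~\ref{prop: lower p-central filtration basic}(3)) to get $\phi(N\cap\gamma_n^p(G)) = \phi^{ext}(N\cap\gamma_n^p(G)) = M\cap\gamma_n^p(G)$, and then descend to the quotients. The paper states this more tersely, while you additionally spell out the well-definedness, the independence from the choice of extension, and the identifications via the second isomorphism theorem; these are correct elaborations rather than a different argument.
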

\begin{proof}
Since $\phi$ is a partial automorphism of $G$, there exists $\phi^{ext}\in\Aut(G)$ such that $\phi^{ext}_{|N} = \phi$. 
By Proposition~\ref{prop: lower p-central filtration basic}(3), $\gamma_{n}^p(G)$ is a characteristic subgroup of $G$, so $\phi^{ext}(\gamma_{n}^p(G)) = \gamma_{n}^p(G)$ for every $n$. Thus $\phi(N\cap \gamma_{n}^p(G)) = \phi^{ext}(N\cap \gamma_{n}^p(G)) = M \cap \gamma_{n}^p(G)$ for every $n$. The lemma follows.
\end{proof}

\subsection{Residual $p$-finiteness criterion for graphs of groups}
We generalize a theorem of \cite{AschenbrennerFriedl13} that every graph of virtually residually $p$-groups, where edge group inclusions are isomorphisms, are virtually residually $p$-finite. Their result, in particular, applies to free-by-cyclic groups.
We will use a criterion for residual $p$-finiteness of graphs of groups stated therein. 

A \emph{filtration} ${\bf G}$ of a graph of groups $\mathcal G$ is a collection $\{{\bf G}_v\}_v$ of \emph{compatible} filtrations ${\bf G}_v = \{G_{v,n}\}_n$ of $G_v$ for each $v\in V(\Gamma)$, in the sense that for all $n$
$$
f_e^{-1}(G_{\tau(e),n}) = f_{\overline e}^{-1}(G_{\tau(\overline e), n}).
$$

 For a given property $X$ (e.g.\ normal, separating), we say that ${\bf G}$ is $X$ if for every $v\in V(Y)$ the filtration ${\bf G}_v$ is $X$. We say that a filtration  ${\bf G}$ of  a graph of groups $\mathcal G$ \emph{separates edge groups} if $f_e(G_e) = \bigcap_n G_{\tau(n),n}\cdot f_e(G_e)$ for all edges $e$.

 Let ${\bf G}_n$ be the $n$-th depth subgroups of the filtration ${\bf G}$ of $\mathcal G$, i.e.\ ${\bf G}_n = (G_{v,n})_{v\in V(Y)}$. Since the filtrations of the vertex groups are compatible, there exists a natural graph of groups quotient $\mathcal G/{\bf G}_n$ which has $Y$ as it underlying graph and vertex groups $G_{v}/G_{v,n}$.

The following will be used to prove Theorem~\ref{thm: main residually p}.
\begin{Theorem}[{\cite[Cor 3.14]{AschenbrennerFriedl13}}]
\label{thm: criterion AF}
Let ${\bf G}$ be a normal separating filtration of $\mathcal G$ which separates edge groups of $\mathcal G$, such that $\pi_1(\mathcal G/{\bf G}_n)$ is residually $p$-finite for every $n\geq 1$. Then $\pi_1 \mathcal G$ is residually $p$-finite.
\end{Theorem}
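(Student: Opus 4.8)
The plan is to show that every nontrivial $g \in \pi_1\mathcal G$ survives in one of the residually $p$-finite quotients $\pi_1(\mathcal G/{\bf G}_n)$. Once this is established the theorem follows immediately: if the image of $g$ in some $\pi_1(\mathcal G/{\bf G}_n)$ is nontrivial, then, as that group is residually $p$-finite, there is a finite $p$-group $K$ and a surjection $\pi_1(\mathcal G/{\bf G}_n)\to K$ detecting it, and the composite $\pi_1\mathcal G \to \pi_1(\mathcal G/{\bf G}_n)\to K$ is the desired finite $p$-quotient of $\pi_1\mathcal G$. Thus everything reduces to proving that $\bigcap_n K_n = \{1\}$, where $K_n := \ker(\pi_1\mathcal G \to \pi_1(\mathcal G/{\bf G}_n))$. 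Here I use that normality of ${\bf G}$ makes each $G_v/G_{v,n}$ a group and that compatibility makes $\mathcal G/{\bf G}_n$ a genuine graph of groups: the common subgroup $f_e^{-1}(G_{\tau(e),n}) = f_{\overline e}^{-1}(G_{\tau(\overline e),n})$ is its edge group, the induced edge maps are injective by construction, and the surjections $G_v \to G_v/G_{v,n}$ assemble into a surjection $\pi_1\mathcal G \to \pi_1(\mathcal G/{\bf G}_n)$.

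To detect a fixed $g \neq 1$ I would write it in reduced Bass--Serre normal form $g = a_0 e_1 a_1 \cdots e_k a_k$, where $e_1\cdots e_k$ is a closed edge path at the basepoint and, at every backtracking position where $e_{i+1} = \overline{e_i}$, the syllable $a_i$ lies outside $f_{e_i}(G_{e_i})$. If $k = 0$, then $g$ lies in a vertex group $G_v$; since the filtration is separating, $g \notin G_{v,n}$ for all large $n$, and, as vertex groups inject into the fundamental group of a graph of groups, $g$ has nontrivial image in $\pi_1(\mathcal G/{\bf G}_n)$, so $g\notin K_n$.

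If $k \geq 1$, I would push $g$ forward to $\pi_1(\mathcal G/{\bf G}_n)$ syllable-by-syllable and show the image remains reduced for suitable $n$. The image of $f_{e_i}(G_{e_i})$ inside $G_{\tau(e_i)}/G_{\tau(e_i),n}$ is exactly $f_{e_i}(G_{e_i})\cdot G_{\tau(e_i),n}$ modulo $G_{\tau(e_i),n}$, so a backtracking syllable $a_i$ stays essential precisely when $a_i \notin f_{e_i}(G_{e_i})\cdot G_{\tau(e_i),n}$. This is exactly where the hypothesis that ${\bf G}$ separates edge groups enters: since $f_{e_i}(G_{e_i}) = \bigcap_n G_{\tau(e_i),n}\cdot f_{e_i}(G_{e_i})$ and these products are nested as $n$ grows, each $a_i \notin f_{e_i}(G_{e_i})$ eventually escapes every $G_{\tau(e_i),n}\cdot f_{e_i}(G_{e_i})$ and stays escaped. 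Choosing $n$ larger than all of the finitely many escape thresholds, the image word is reduced with $k \geq 1$ edge letters, hence nontrivial by Britton's Lemma for graphs of groups, so again $g\notin K_n$.

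The main obstacle is the bookkeeping in the hyperbolic case: I must ensure that passing to the quotient creates no new cancellations, which comes down to verifying that the only reductions available are the backtracking pinches controlled by the edge-separation hypothesis, and that the induced edge maps of $\mathcal G/{\bf G}_n$ stay injective so that Britton's Lemma applies downstairs. Setting up the normal form and checking that ``reduced upstairs maps to reduced downstairs'' is the technical heart; the elliptic case and the final assembly into a finite $p$-quotient are then routine.
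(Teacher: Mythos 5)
The paper itself does not prove this statement: it is imported verbatim from Aschenbrenner--Friedl \cite[Cor 3.14]{AschenbrennerFriedl13} and used as a black box, so there is no internal proof to compare against. Your argument is correct and self-contained, and it essentially reconstructs the Bass--Serre-theoretic argument underlying the cited result. The reduction is right: it suffices to show that each $g \neq 1$ in $\pi_1\mathcal G$ survives in some $\pi_1(\mathcal G/{\bf G}_n)$, after which residual $p$-finiteness of that quotient finishes the proof. Normality and compatibility make $\mathcal G/{\bf G}_n$ a genuine graph of groups with edge groups $G_e/f_e^{-1}(G_{\tau(e),n})$ and injective induced edge maps (injectivity holds because one quotients $G_e$ by exactly the kernel of $G_e \to G_{\tau(e)}/G_{\tau(e),n}$, and compatibility says this kernel is the same from both ends of the edge), so vertex groups embed downstairs and the normal form theorem applies there. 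The separating hypothesis then handles the elliptic case, and for a reduced word $a_0e_1a_1\cdots e_ka_k$ with $k\geq 1$, the edge-group-separation hypothesis together with the nesting $G_{\tau(e),n+1}\cdot f_e(G_e) \subseteq G_{\tau(e),n}\cdot f_e(G_e)$ produces a single $n$ at which every backtracking syllable stays outside the image of the quotient edge group, so the image word is reduced and hence nontrivial. One remark: the worry in your final paragraph about ``new cancellations'' is unnecessary. Reducedness of a word over a graph of groups is a condition imposed only at backtracking positions of the underlying edge path, and the quotient map leaves the edge path unchanged; therefore the verification you already carried out at those positions is the whole check, and nothing further needs to be ruled out.
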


\subsection{Main proof}
The following lemma can be deduced from \cite{Wilkes2019} but in this case it is easy to prove it directly.

\begin{Lemma}\label{lem:graphs with partial identities}
Let $P$ be a finite $p$-group. Let $\mathcal G$ be a graph of $p$-groups where each vertex group comes with an injective homomorphism $\psi_v: G_v\to P$, and each edge group comes with an inclusion $\psi_e:G_e\to P$ such that $\psi_e = \psi_{\overline e}$. Moreover, assume that for each edge $e$ the composition $\psi_{\tau(e)}\cdot f_e = \psi_e$. Then $\pi_1\mathcal G$ is residually $p$-finite.
\end{Lemma}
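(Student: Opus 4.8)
The plan is to produce a single homomorphism $\rho\colon\pi_1\mathcal G\to P$ onto a finite $p$-group whose kernel is free, and then invoke Lemma~\ref{lem: res p}(2). First I would define $\rho$ directly on the presentation of $\pi_1\mathcal G$ by setting $\rho|_{G_v}=\psi_v$ for every vertex $v$ and $\rho(e)=1$ for every edge $e\in E(Y)$ (in particular all stable letters are killed). To see this is well defined I would check that it respects the defining relations in $K$: relation (2) is immediate since every edge maps to $1$, and for relation (1) one computes $\rho(e\,f_e(g)\,\overline e)=\psi_{\tau(e)}(f_e(g))=\psi_e(g)$ using the hypothesis $\psi_{\tau(e)}\circ f_e=\psi_e$, while $\rho(f_{\overline e}(g))=\psi_{\tau(\overline e)}(f_{\overline e}(g))=\psi_{\overline e}(g)$, and these agree because $\psi_e=\psi_{\overline e}$. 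Thus $\rho$ exists, its image lies in $P$ and so is a finite $p$-group, and by construction $\rho|_{G_v}=\psi_v$ is injective for each $v$.

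Next I would argue that $N:=\ker\rho$ is a free group. Since $\rho$ is injective on each vertex group $G_v$, it is also injective on every conjugate $gG_vg^{-1}$: if $\rho(g y g^{-1})=1$ with $y\in G_v$, then $\rho(y)=1$, hence $y=1$. Now consider the action of $\pi_1\mathcal G$ on its Bass--Serre tree $T$. Every vertex stabilizer is a conjugate of some $G_v$ and every edge stabilizer is a conjugate of some $G_e$, so by the previous observation $N$ meets each of these trivially. Hence $N$ acts freely on $T$; to handle inversions one can either pass to the standard non-inverting model, or note that an inverting element would square into a vertex stabilizer that $N$ meets trivially. Therefore $N$ is free by the structure theorem for groups acting freely on trees, and in particular $N$ is residually $p$-finite.

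Finally, $\rho$ realizes a short exact sequence $1\to N\to\pi_1\mathcal G\to Q\to1$ with $Q=\rho(\pi_1\mathcal G)\le P$ a finite $p$-group and $N$ free, so Lemma~\ref{lem: res p}(2) gives that $\pi_1\mathcal G$ is residually $p$-finite. I expect the only genuinely delicate point to be the freeness of $N$: one must be sure that $\rho$ detects every element having a fixed point on $T$ (equivalently every torsion element, since finite subgroups are conjugate into vertex groups) and dispose of the minor bookkeeping around edge inversions. Everything else --- the existence of $\rho$ and the concluding extension argument --- is a routine verification together with a direct citation.
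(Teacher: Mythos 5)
Your proposal is correct and follows essentially the same route as the paper: both define the homomorphism $\rho$ (the paper's $\psi$) by restricting to $\psi_v$ on vertex groups and killing the stable letters, observe that the kernel is free because it meets every conjugate of a vertex group trivially (the paper phrases this as the kernel splitting as a finite graph of trivial groups, you phrase it via the free action on the Bass--Serre tree --- the same Bass--Serre argument), and then conclude by Lemma~\ref{lem: res p}(2). Your worry about edge inversions is moot since the standard Bass--Serre tree of a graph of groups carries no inversions, and your verification of the relations is exactly the routine check the paper leaves implicit.
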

\begin{proof}
The assumption on $\mathcal G$ imply that there exists an epimorphism $\psi: \pi_1\mathcal G \to P$ which is an isomorphism on each vertex group. Indeed, $\psi$ is defined as $\psi_v$ on each vertex group $G_v$, and sending all edge generators (not edge groups) to the identity.
The kernel $\ker \psi$ is thus a finite index subgroup of $\pi_1\mathcal G$ and splits as a finite graph of trivial groups, i.e.\  $\ker \psi$ is a finite rank free group. By Lemma~\ref{lem: res p}(2) $\pi_1\mathcal G$ is residually $p$-finite.
\end{proof}

We are now ready to prove the main theorem.

\begin{Theorem}\label{t:main}
For every prime $p$, the fundamental group of an algebraically clean graph of free groups has a finite index subgroup that is residually $p$-finite.
\end{Theorem}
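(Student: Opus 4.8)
The plan is to apply the Aschenbrenner--Friedl criterion (Theorem~\ref{thm: criterion AF}) to a suitable filtration built from the lower $p$-central series. By Proposition~\ref{prop:multiple HNN}, I may assume $\mathcal G$ has a single vertex group $G_T$, which is a finite rank free group, and edges $e$ for which $f_e$ is the inclusion of a free factor and $f_{\overline e}$ is a partial automorphism of $G_T$. The natural candidate filtration of $G_T$ is the lower $p$-central filtration $\{\gamma_n^p(G_T)\}_n$, which is normal and separating since free groups are residually $p$-finite. The key point is that this filtration is \emph{compatible} with the edge maps and \emph{separates edge groups}, and that each finite quotient $\pi_1(\mathcal G/{\bf G}_n)$ can be made residually $p$-finite after passing to a finite index subgroup.

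The main obstacle is that the filtration $\{\gamma_n^p(G_T)\}_n$ need not be compatible on the nose: compatibility requires $f_e^{-1}(\gamma_n^p(G_T)) = f_{\overline e}^{-1}(\gamma_n^p(G_T))$, and since $f_e$ is an inclusion of a free factor while $f_{\overline e}$ is a partial automorphism $\phi$, this amounts to asking that $\phi$ carry $G_e\cap\gamma_n^p(G_T)$ to $\phi(G_e)\cap\gamma_n^p(G_T)$. This is precisely what Lemma~\ref{lem: partial automorphisms descend} provides: because $\gamma_n^p(G_T)$ is characteristic, the partial automorphism $\phi$ preserves the filtration restricted to subgroups, so $\sigma_n(\phi)$ and $\theta_n(\phi)$ are well-defined. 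Thus compatibility holds automatically. Separation of edge groups follows because $G_e$ is a free factor, so $G_e = \bigcap_n \gamma_n^p(G_T)\cdot G_e$, using that the lower $p$-central filtration of $G_T$ induces the lower $p$-central filtration on the free factor together with residual $p$-finiteness.

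The remaining and genuinely delicate step is to ensure $\pi_1(\mathcal G/{\bf G}_n)$ is residually $p$-finite for every $n$. Here I would use Lemma~\ref{lem:graphs with partial identities}: the quotient $\mathcal G/{\bf G}_n$ is a graph of finite $p$-groups $G_T/\gamma_n^p(G_T)$, with edge maps induced by inclusions and by the descended partial automorphisms $\sigma_n(\phi)$. To apply the lemma I need a common finite $p$-group $P$ into which all vertex and edge groups embed compatibly with the edge maps. The obstruction is that the partial automorphisms $\sigma_n(\phi)$ need not be partial identities, so they need not extend to the identity on a common overgroup. This is where Corollary~\ref{cor: trivial action of a subgroup} and the observation following it enter: after passing to the finite index subgroup $K\subseteq\Aut(G_T)$ lying in the kernel of $\Aut(G_T)\to\Aut(L_1^p(G_T))$, every $\sigma_n(\phi)$ for $\phi\in K$ has order a power of $p$.

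Concretely, I would replace $\pi_1\mathcal G$ by the finite index subgroup corresponding to the induced homomorphism to the finite $p$-group $\Aut(L_1^p(G_T))$-image, arranging that each $f_{\overline e}$ becomes a partial automorphism whose extension lies in the relevant kernel $K$. Then each $\sigma_n(\phi)$ is an automorphism of $G_T/\gamma_n^p(G_T)$ of $p$-power order, so I can form the semidirect product $P_n = (G_T/\gamma_n^p(G_T))\rtimes \langle\sigma_n(\phi_e)\rangle_e$, a finite $p$-group, into which the vertex group embeds and in which each edge map $\sigma_n(f_{\overline e})$ is realized by conjugation — exactly the setup of Lemma~\ref{lem:graphs with partial identities}. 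This yields residual $p$-finiteness of each $\pi_1(\mathcal G/{\bf G}_n)$, and Theorem~\ref{thm: criterion AF} then gives residual $p$-finiteness of the finite index subgroup of $\pi_1\mathcal G$, completing the proof.
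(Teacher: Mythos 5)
Your overall strategy --- apply Theorem~\ref{thm: criterion AF} to the lower $p$-central filtration of the vertex group of the one-vertex decomposition from Proposition~\ref{prop:multiple HNN}, get compatibility from Lemma~\ref{lem: partial automorphisms descend}, and make the quotients residually $p$-finite via Corollary~\ref{cor: trivial action of a subgroup}(2) and a Lemma~\ref{lem:graphs with partial identities}-type argument --- is the same as the paper's, and most individual steps are sound. But there is a genuine gap at the pivotal reduction, where you ``replace $\pi_1\mathcal G$ by the finite index subgroup \dots arranging that each $f_{\overline e}$ becomes a partial automorphism whose extension lies in the relevant kernel $K$.'' This tacitly assumes that the finite index subgroup again carries a \emph{one-vertex} algebraically clean decomposition whose edge maps extend to automorphisms acting trivially on $L_1^p$ of the new vertex group, and that is false in general. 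The subgroup $H=\ker\bigl(\pi_1\mathcal G'\to Q'\to \Aut(L_1^p(F))\bigr)$ naturally decomposes over the induced finite cover $Y$ of the wedge of circles, with one copy of $F$ per vertex of $Y$ and the \emph{original} maps $\phi_i$ on edges. If you re-collapse a spanning tree to restore the one-vertex form, the vertex group becomes $F_T\cong F*\cdots *F$ (one factor per vertex of $Y$), and any non-tree edge joining distinct vertices $u\neq v$ of $Y$ gives an edge map carrying a free factor $N_i$ of the copy $F_u$ onto a free factor $M_i$ of the copy $F_v$. On $L_1^p(F_T)\cong\bigoplus_w L_1^p(F_w)$ every extension of such a map must carry nonzero classes supported in the $u$-summand to classes supported in the $v$-summand, so \emph{no} extension lies in $\ker\bigl(\Aut(F_T)\to\Aut(L_1^p(F_T))\bigr)$. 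Such edges exist whenever the cover is nontrivial and edge groups are nontrivial (the non-loop edges of a nontrivial permutation contain a cycle, which cannot lie entirely in a spanning tree), i.e.\ exactly when passing to finite index is needed. Nor can you skip the finite-index passage and apply Theorem~\ref{thm: criterion AF} to $\mathcal G'$ directly: already for the Klein bottle group $\mathbb Z\rtimes_{-1}\mathbb Z$ the quotients $(\mathbb Z/p^{n-1})\rtimes_{-1}\mathbb Z$ are not residually $p$-finite for odd $p$, since in any finite $p$-quotient conjugation has $p$-power order while inversion has order $2$.

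The paper closes exactly this gap by never re-collapsing: it applies Theorem~\ref{thm: criterion AF} to the multi-vertex decomposition of $H$ over $Y$, and for each $n$ it proves residual $p$-finiteness of the quotient by passing to a further subgroup of $p$-power index (corresponding to $Q_n=\ker(Q\to\Aut(F/\gamma_n^p(F)))$) and constructing embeddings $\psi_v$ of \emph{all} its vertex groups into the single $p$-group $F/\gamma_n^p(F)$ by composing $\sigma_n$ of the chosen extensions along paths to a base vertex; well-definedness of $\psi_v$ is precisely the triviality, under $\sigma_n$, of the holonomy of every loop. This path-composition bookkeeping across several vertex groups is what your proposal is missing. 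Your semidirect product $P_n=(F/\gamma_n^p(F))\rtimes\langle\sigma_n(\phi_e)\rangle_e$ is a nice target --- it realizes edge maps by conjugation instead of forcing partial identities, though then you need the \emph{proof} of Lemma~\ref{lem:graphs with partial identities} (map to a finite $p$-group injective on vertex groups, free kernel, then Lemma~\ref{lem: res p}(2)) rather than its statement --- but it only becomes available after the holonomy problem above has been solved, which is the actual work. (A minor slip elsewhere: $\Aut(L_1^p(F))\cong \mathrm{GL}_r(\mathbb F_p)$ is finite but not a $p$-group; this is harmless since the theorem only asserts the existence of a finite index subgroup.)
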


To illustrate the proof, we first consider the special case of free-by-free groups.
\begin{proof}[Proof for $F\rtimes Q$] 
Let $F, Q$ be finite rank free groups. In particular, $F$ and $Q$ are residually $p$-finite. 
Let $\alpha: Q\to \Aut(F)$ be a homomorphism associated to the semi-direct product. 
By Lemma~\ref{prop: lower p-central filtration basic}(3) $\gamma_2^p(N)$ is characteristic in $N$, so every automorphism $\phi \in \Aut(F)$ descends to an automorphism of $L_1^p(F)$, i.e.\ there is a well-defined homomorphism $\beta:\Aut(F) \to \Aut(L_1^p(F))$. 
By composing $\alpha$ with $\beta$, we obtain a homomorphism to a finite group $\beta\cdot \phi: Q\to \Aut(L_1^p(F))$. Let $Q'$ be its kernel. 
Then $F\rtimes Q'$ is a finite index subgroup of $F\rtimes Q$, which we claim is residually $p$-finite.

By Corollary~\ref{cor: trivial action of a subgroup}(2) the image $\sigma_i(Q')$ is a $p$-group for every $i$. In particular $(F/\gamma_i^p)\rtimes Q'$ is residually $p$-finite. Indeed if $g\in (F/\gamma_i^p)\rtimes Q'$ survives in $Q'$, then we can use the fact that $Q'$ is residually $p$-finite. Otherwise, when $g \in \ker((F/\gamma_i^p)\rtimes Q'\to Q')$, then $g$ must survive in the quotient $(F/\gamma_i^p)\rtimes \sigma_i(Q')$, which is a $p$ group as its order is a power of $p$.
Since every element $g\in F\rtimes Q'$ survives in $F/\gamma_i^p(F)\rtimes Q'$ for some $i$, we conclude that $F\rtimes Q'$ is residually $p$-finite.
\end{proof}

We now move to the general case. Let $F$ be a finite rank free group, and let $N, M$ be two subgroups of the same rank, each being a free factor of $F$. Every isomorphism $\phi:N\to M$ can be extended to an automorphism $\phi^{ext}:F\to F$, i.e.\ $\phi^{ext}_{|N}  = \phi$, which we call an \emph{extension of $\phi$ to $F$}. Note that an extension of $\phi$ is  far from being unique. Indeed, it is only unique if $N = M = F$.

\begin{proof}[Proof in general case]
Let $G'$ be a fundamental group of an algebraically clean graph of free groups.
By Proposition~\ref{prop:multiple HNN} we can think of $G'$ as the fundamental group of a graph of groups $\mathcal G'$ where the underlying graph $Y'$ of $\mathcal G'$ is a wedge of $k$ oriented circles $\{e_1, \dots, e_n\}$. The unique vertex group of $\mathcal G'$ is identified with a finite rank free group $F$, and for each $1\leq i\leq k$ the edge groups $G_{e_i}$ can be identified with a free factor $N_i$ so that $f_{e_i} = \Id_{N_i}$ and $f_{\overline e_i} =\phi_i:N_i\to M_i$ is a partial automorphism onto a free factor $M_i$ of $F$.
Let $Q'$ be a free group freely generated by $\{\phi_1, \dots, \phi_k\}$, which can be naturally identified with $\pi_1(Y)$.

\begin{step} We construct a finite index normal subgroup $Q$ of $Q'$ such that $\theta_n(Q)$ is trivial.\end{step}

Let $ \{\phi_1^{ext}, \dots,\phi_k^{ext}\}$ be a choice of extensions of $\{\phi_1, \dots, \phi_k\}$, i.e.\ for each $1\leq i\leq k$ $\phi_i^{ext}\in \Aut(F)$ such that ${\phi_i^{ext}}_{|N_i} = \phi_i$.
Recall the homomorphism $\theta_1:\Aut(F) \to \Aut (L_1^p(F))$ from Proposition~\ref{prop: lower p-central filtration basic}(3).
We construct a subgroup $Q$ of $Q'$ as
$$ Q = \ker (Q'\to \Aut(F) \to \Aut(L_1^p(F))$$
where the first map sends $\phi_i$ to $\phi_i^{ext} \in \Aut(F)$, and the second map is $\theta_1$. Since $ \Aut(L_1^p(F))$ is a finite group, the index $[Q':Q]$ is finite.

By Corollary~\ref{cor: trivial action of a subgroup}(1), the image of $Q$ in $\Aut(L_n^p(F))$ is trivial for every $n$.

\begin{step} We construct the corresponding finite index normal subgroup $G$ of $G'$ and realize it as the fundamental group of a graph of groups $\mathcal G$ covering the graph of groups for $\mathcal G'$.\end{step}
Consider the finite index subgroup $G$ of $G'$ corresponding to $Q$, i.e.\ $G = \ker(G'\to Q' \to Q'/Q)$.
The group $G$ is the fundamental group of the following graph of groups $\mathcal G$. The underlying graph $Y$ of $\mathcal G$ is the finite covering space of $Y'$ corresponding to $Q\subseteq Q'$. Each vertex group of $\mathcal G$ is a copy of $F$. The edge groups of edges labelled with $e_i$ are copies of $N_i$ with the maps $\Id_{N_i}$ and $\phi_i$ into the respective vertex groups. We note that $\mathcal G$ is still an algebraically clean graph of finite rank free groups, with a natural quotient $Q$. 

\begin{step} There is a natural filtration ${\bf G}$ of $\mathcal G$ where ${\bf G}_v = \{\gamma_n^p(G_v)\}_n$ of $G_v$ for each $v\in V(Y)$. The filtration is normal, separating, and it separates the edge groups.\end{step}
Lemma~\ref{lem: partial automorphisms descend} implies that the filtrations ${\bf G}_v$ on the individual vertex groups are compatible and indeed define a filtration on $\mathcal G$. It is immediate that ${\bf G}$ is normal. Since all the vertex groups are free groups, hence residually $p$-finite, their lower $p$-central series are separating. 
Since the edge groups are retracts of the vertex groups, and vertex groups are residually $p$-finite, \cite[Lem 1.6]{AschenbrennerFriedl13} implies that the filtration separates the edge groups.


\begin{step} For each $n$, $\pi_1(\mathcal G/{\bf G}_n)$ is residually $p$-finite. \end{step}
The graph of groups $\mathcal G/{\bf G}_n$ has all the vertex groups naturally isomorphic to $F/\gamma_n^p(F)$ and the edge groups are $(N_i/(N_i\cap \gamma_{n}^p(F))$ for respective $i$, with the respective edge maps being partial identities and partial automorphisms $\phi_i$. 
We construct a further subgroup $Q_n$ of $Q$ as
$$ Q_n = \ker (Q\to \Aut(F) \to \Aut(F/\gamma_n^p(F))$$
the first map sends $\phi_i$ to $\phi_i^{ext} \in \Aut(F)$, and the second map is the map $\sigma_n$ defined in Lemma~\ref{prop: lower p-central filtration basic}(3). 
By Corollary~\ref{cor: trivial action of a subgroup}(2), the image of $Q$ in $\Aut(F/\gamma_n^p(F))$ is a finite $p$-group, and therefore $[Q:Q_n]$ is a $p$-power. 
We now claim that the kernel $K_n = \ker (\pi_1(\mathcal G/{\bf G}_n)\to Q\to Q/Q_n)$ is the fundamental group of a graph of groups satisfying the assumptions of Lemma~\ref{lem:graphs with partial identities}, and therefore is residually $p$-finite. 
Indeed, $K_n$ is a finite cover $\mathcal G_n$ of the graph of groups $\mathcal G/{\bf G}_n$ whose all the vertex groups are still naturally isomorphic to $F/\gamma_n^p(F)$, and edge groups are $(N_i/(N_i\cap \gamma_{n}^p(F))$ for respective $i$, with the respective edge maps being partial identities and partial automorphisms $\phi_i$. 
We fix a vertex group $G_{v_0}$ of $\mathcal G_n$ and for each $v\in V(Y)$ we construct a map $\phi_v:G_v\to G_{v_0}$.
We describe each map as an automorphism $\psi_v\in \Aut(F/\gamma_n^p(F))$ using the natural identification of each $G_v$ with $F/\gamma_n^p(F)$. 
First, $\psi_{v_0} = \Id_{F/\gamma_n^p(F)}$. For $v$ such that there is a path from $v$ to $v_0$ is labelled by edges $e_{i_1}\dots{e_{i_k}}$ we define $\psi_{v} = \sigma_n(\phi_{i_k}^{ext}\cdots \phi_{i_1}^{ext})$. 

We claim that $\psi_{v}$ does not depend on the choice of the path from $v$ to $v_0$.
Indeed, given some other path with labels  $e_{j_1}\dots{e_{j_{k'}}}$ we get that 
$$\theta_n(\phi_{i_k}^{ext}\cdots \phi_{i_1}^{ext})\inv{\theta_n(\phi_{j_{k'}}^{ext}\cdots \phi_{j_{1}}^{ext})}
 = \theta_n(\phi_{i_k}^{ext}\cdots \phi_{i_1}^{ext}\inv{(\phi_{j_1}^{ext})}\cdots \inv{(\phi_{j_{k'}}^{ext})}) = \Id_{G_{v_0}}
$$
by our choice of $Q_n$. This proves that $\theta_n(\phi_{i_k}^{ext}\cdots \phi_{i_1}^{ext}) = \theta_n(\phi_{j_{k'}}^{ext}\cdots \phi_{j_{1}}^{ext})$.
For any edge $e$ with label $i$, we set $\psi_e = \psi_{\overline e} = \psi_{\tau(e){|G_e}}$, and easily verify that this choice is compatible with both $\psi_{\tau(e)}$ and $\psi_{\tau(\overline e)}$. 
By Lemma~\ref{lem:graphs with partial identities} $\mathcal G_n$ is residually $p$-finite. 
By Lemma~\ref{lem: res p}, so is $\mathcal G/{\bf G}_n$ since it has a power-$p$ subgroup that is residually $p$-finite.

\begin{step} The group $\pi_1(\mathcal G)$ is residually $p$-finite.
\end{step}
Residual $p$-finiteness of $\pi_1(\mathcal G)$ follows from Theorem~\ref{thm: criterion AF} and Steps 3 and 4.
\end{proof}

\section{Cohomological $p$-completeness and goodness}

In this section we prove Theorem~\ref{thm: main residually p}. The proofs are nearly identical for goodness/$p$-completeness.  Therefore, we will just prove the $p$-completeness statements, and mention in the last subsection how the same arguments work for goodness. 
\subsection{Cohomological $p$-completeness}
Recall from the introduction that a discrete group $G$ is \emph{cohomologically $p$-complete} if the canonical homomorphism $G\to\widehat G_p$ to the pro-$p$ completion induces an isomorphism \[H^*_{cont}(\widehat G_p, \mathbb F_p)\to H^*(G, \mathbb F_p).\]

\begin{Theorem} The following groups are cohomologically $p$-complete for all $p$:

\begin{enumerate}
\item Free groups \cite{LinnelSchick07}
\item Finitely generated nilpotent groups \cite{lorensen08}
\item Right-angled Artin groups \cite{lorensen08}
\item Free products and direct products of cohomologically $p$-complete groups\cite{LinnelSchick07}. 
\item Retracts of cohomologically $p$-complete groups \cite{LinnelSchick07}. 

\end{enumerate}

\end{Theorem}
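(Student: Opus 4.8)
The plan is to treat this as a collection of structural statements and, in each case, to verify cohomological $p$-completeness by comparing the cohomological machinery for the discrete group with its pro-$p$ analogue. The unifying observation is that over the field $\mathbb{F}_p$ the comparison map $i^*\colon H^*_{cont}(\widehat G_p,\mathbb{F}_p)\to H^*(G,\mathbb{F}_p)$ is automatically an isomorphism in degrees $0$ and $1$ (since any homomorphism from $G$ to the finite $p$-group $\mathbb{F}_p$ factors through $\widehat G_p$) and is injective in degree $2$. So in every case the work is to establish surjectivity in degree $2$ and bijectivity in higher degrees, and I would deduce this from a structural decomposition. For free groups (1) this is immediate: a free group has cohomological dimension $1$ and its pro-$p$ completion is a free pro-$p$ group of $\mathbb{F}_p$-cohomological dimension $1$, so both sides vanish above degree $1$ and agree below, forcing $i^*$ to be an isomorphism.

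The two product constructions in (4) and the retract statement (5) are formal consequences of naturality. For a direct product one has $\widehat{A\times B}_p\cong\widehat A_p\times\widehat B_p$, and because $\mathbb{F}_p$ is a field the K\"unneth theorem identifies both cohomologies with $H^*(A)\otimes H^*(B)$ compatibly with $i^*$, so the comparison map is the tensor product of the two comparison maps and is an isomorphism whenever each factor is cohomologically $p$-complete. For a free product the pro-$p$ completion is the coproduct $\widehat A_p\amalg\widehat B_p$ of pro-$p$ groups, reduced $\mathbb{F}_p$-cohomology splits as $\widetilde H^*(A)\oplus\widetilde H^*(B)$ in both the discrete and the continuous settings, and a five-lemma comparison of the two Mayer--Vietoris sequences finishes the argument. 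For retracts (5), functoriality of pro-$p$ completion turns a splitting $H\hookrightarrow G\twoheadrightarrow H$ into a splitting $\widehat H_p\hookrightarrow\widehat G_p\twoheadrightarrow\widehat H_p$, so $i^*_H$ is a natural direct summand of $i^*_G$ and inherits the isomorphism property.

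The genuinely delicate cases are (2) and (3), and I expect the central extension step in the nilpotent case to be the main obstacle. For finitely generated nilpotent groups I would induct on the Hirsch length via a central extension $1\to C\to G\to Q\to 1$ with $C$ cyclic, comparing the Lyndon--Hochschild--Serre spectral sequence of the discrete extension with that of the corresponding extension of pro-$p$ groups. The difficulty is that pro-$p$ completion is not exact in general, so one must first show that $1\to\widehat C_p\to\widehat G_p\to\widehat Q_p\to 1$ remains exact and that the two spectral sequences agree on $E_2$ through $i^*$; this holds precisely because nilpotent groups are well behaved under the lower $p$-central filtration, but it is the step requiring real control. Right-angled Artin groups (3) I would attack by induction on the number of vertices through the standard link splitting $A_\Gamma=A_{\Gamma\setminus v}*_{A_{\mathrm{lk}(v)}}\bigl(A_{\mathrm{lk}(v)}\times\langle v\rangle\bigr)$, an amalgam over the retract sub-group $A_{\mathrm{lk}(v)}$. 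Because this edge group is a retract of both factors, the pro-$p$ completion of the amalgam is the pro-$p$ amalgam and the Mayer--Vietoris sequences match; the obstacle here, exactly as in the nilpotent case, is ensuring that the amalgam of pro-$p$ completions computes the completion of the amalgam, which is what the retract hypothesis is designed to secure.
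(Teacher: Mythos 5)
You should first be aware that the paper contains no proof of this statement at all: it is a summary of known results, stated with citations to \cite{LinnelSchick07} and \cite{lorensen08}, so there is no internal argument to compare against, and your proposal is in effect a reconstruction of the arguments in those references. On the merits, your treatments of (1), (4) and (5) are correct and essentially complete: for free groups both $H^*(F,\mathbb{F}_p)$ and $H^*_{cont}(\widehat F_p,\mathbb{F}_p)$ vanish above degree $1$ (the pro-$p$ completion of a finite rank free group is free pro-$p$), and degrees $0,1$ are handled by the automatic isomorphism you note (this side remark is itself correct; it is Serre's exercise that every group satisfies $(G_1)$, transferred to the pro-$p$ setting); pro-$p$ completion does commute with finite direct and free products; and a retract of an isomorphism is an isomorphism. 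The one caveat in (4) is that the K\"unneth identification $H^*(A\times B,\mathbb{F}_p)\cong H^*(A,\mathbb{F}_p)\otimes H^*(B,\mathbb{F}_p)$, in both the discrete and continuous settings, needs cohomology of finite type in each degree, which is harmless for the finitely generated groups in play but should be flagged.

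For (2) and (3) you have the right strategies — they match what the cited papers do, and indeed the paper itself remarks that the Mayer--Vietoris/Five Lemma scheme ``is essentially the argument for right-angled Artin groups used in \cite{lorensen08}'' — but in both cases you assert rather than prove the crux: exactness of $1\to\widehat C_p\to\widehat G_p\to\widehat Q_p\to 1$ for the central extension, respectively properness/efficiency of the pro-$p$ amalgam over the retract $A_{\mathrm{lk}(v)}$. Those assertions are true, but they are precisely the technical content of \cite{lorensen08} (compare also the efficiency criterion of \cite[Cor 5.12]{AschenbrennerFriedl13} that the paper uses for its own main theorem), so as written your sketch defers to the references exactly where the work lies. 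One point you should make explicit in the nilpotent induction: cohomological $p$-completeness is defined only with trivial $\mathbb{F}_p$ coefficients, so the inductive hypothesis on $Q$ is a priori too weak to identify the $E_2$-pages $H^i(Q,H^j(C,\mathbb{F}_p))$ of the two Lyndon--Hochschild--Serre spectral sequences. What rescues the argument is centrality: $Q$ acts trivially on $H^j(C,\mathbb{F}_p)$, and since $C$ is cyclic each $H^j(C,\mathbb{F}_p)$ is a finite direct sum of trivial modules $\mathbb{F}_p$, so the comparison really does reduce to the trivial-coefficient case. Without that observation the induction as you set it up does not close.
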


 We refer to \cite{LinnelSchick07} for further details on cohomologically $p$-complete groups. 
The idea behind our proof is simple. We are considering multiple HNN extensions of a free group $F$, and the cohomology of these can be computed by a Mayer-Vietoris sequence. 
It is well-known that free groups are cohomologically $p$-complete for every $p$, hence four out of every five terms in the Mayer-Vietoris sequence are cohomology groups of cohomologically $p$-complete groups, and the remaining term is $H^\ast(G, \mathbb{F}_p)$. If there was a similar exact sequence for the cohomology of the pro-$p$ completion $\widehat G^p$, then we would be done by the Five Lemma (this is essentially the argument for right-angled Artin groups used in \cite{lorensen08}). 
The following property of a graph of groups is a sufficient condition for this pro-$p$ Mayer-Vietoris sequence \cite[Lem 5.11]{AschenbrennerFriedl13}. A profinite version can be found in \cite[Prop 4.3]{WiltonZalesskii2010}.

\begin{Definition}
Let  $G$ be the fundamental group of a graph of groups, and suppose $G$ is residually $p$-finite. The pro-$p$ topology on $G$ is \emph{$p$-efficient} if the vertex and edge groups of G are closed in the pro-$p$ topology of $G$ and if the pro-$p$ topology on $G$ induces the full pro-$p$ topologies on the vertex and edge groups of $G$.
\end{Definition}

In general, if $H < G$ and $G$ is residually $p$-finite, the pro-$p$ topology on $G$ induces the full pro-$p$ topology on $H$ if and only if for every $p^n$-index subgroup $K < H$, there is a $p^m$-index subgroup $J<G$ with $J \cap H \subset K$. A subgroup $H<G$ is closed in the pro-$p$ topology if it is the intersection of $p^n$-index subgroups. 

\begin{Lemma}\label{l:retract}
Let $\mathcal G$ be a graph of groups where the edge groups are retracts of the vertex group. Then $\pi_1 \mathcal G$ is $p$-efficient if and only if 
\begin{enumerate}
\item $G = \pi_1 \mathcal G$ is residually $p$-finite,
\item the pro-$p$ topology on $G$ induces the pro-$p$ topology on $G_v$ for all vertices $v$, and
\item every vertex group $G_v$ is closed in the pro-$p$ topology of $G$.
\end{enumerate}
\end{Lemma}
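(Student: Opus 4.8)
The claim is an "if and only if" relating $p$-efficiency to the three conditions (1)--(3), under the standing hypothesis that every edge group is a retract of its adjacent vertex group. Let me think about what $p$-efficiency requires and how the retract hypothesis simplifies matters.

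$p$-efficiency has two parts: (a) vertex and edge groups are closed in the pro-$p$ topology on $G$, and (b) the pro-$p$ topology on $G$ induces the full pro-$p$ topology on each vertex and edge group. The three listed conditions concern only residual $p$-finiteness together with the vertex-group versions of (a) and (b). So the forward direction (⇒) is essentially immediate: $p$-efficiency directly asserts the vertex-group statements in (a) and (b), and residual $p$-finiteness of $G$ is the condition that $\{1\}$ is closed, which follows because $p$-efficiency presupposes $G$ is residually $p$-finite. The content is in the reverse direction (⇐): assuming (1)--(3), I must upgrade the vertex-group conclusions to the edge groups as well.

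This is where the retract hypothesis does the work. Suppose $G_e \hookrightarrow G_{\tau(e)}$ is a retract, witnessed by $r: G_{\tau(e)} \to G_e$ with $r|_{G_e} = \mathrm{Id}$. First I would prove that the pro-$p$ topology on $G$ induces the full pro-$p$ topology on $G_e$. By (2), it induces the full pro-$p$ topology on $G_{\tau(e)}$, so it suffices to show the pro-$p$ topology on $G_{\tau(e)}$ induces the full pro-$p$ topology on its retract $G_e$; this is a standard consequence of the retraction, since given a $p^n$-index subgroup $K < G_e$, the preimage $r^{-1}(K)$ is a finite-$p$-index subgroup of $G_{\tau(e)}$ meeting $G_e$ exactly in $K$. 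Composing the two inducing statements (using transitivity of "induces the full pro-$p$ topology") gives (b) for edge groups. For closedness of $G_e$ in $G$, I would argue that $G_e$ is closed in $G_{\tau(e)}$ in the latter's pro-$p$ topology — again because $G_e = r^{-1}(\{1\}) \cap$ (something), or more directly because a retract of a residually $p$-finite group is an intersection of finite-$p$-index subgroups — and then use that $G_{\tau(e)}$ is closed in $G$ by (3) together with the fact that the subspace topology on $G_{\tau(e)}$ from $G$ agrees with its own pro-$p$ topology by (2), so closed-in-$G_{\tau(e)}$ plus closed-in-$G$ yields closed-in-$G$ for $G_e$.

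The main obstacle is the bookkeeping around these topological transitivity statements: I must be careful that "induces the full pro-$p$ topology" composes correctly along the inclusions $G_e < G_{\tau(e)} < G$, and that closedness transfers correctly (closedness of $G_e$ in $G_{\tau(e)}$ plus the identification of topologies from (2) and (3) must be combined without circularity). I expect these to reduce to the elementary characterizations of "induces the full pro-$p$ topology" and "closed in the pro-$p$ topology" recalled in the paragraph preceding the lemma, so no genuinely hard input is needed beyond the retract trick $K \mapsto r^{-1}(K)$; the real care is in verifying that residual $p$-finiteness of $G$ (condition (1)) is exactly what makes the pro-$p$ topology Hausdorff so that these closedness arguments are meaningful.
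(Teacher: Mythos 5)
Your proposal is correct and follows essentially the same route as the paper: the forward direction is immediate from the definition, and the reverse direction uses the retraction $r$ to push conditions (2) and (3) down from vertex groups to edge groups --- your subgroup-level trick $K \mapsto r^{-1}(K)$ is just the paper's observation that every homomorphism $G_e \to P$ extends to $G_v$ via $r$, and your closedness argument (retracts of residually $p$-finite groups are closed, then combine with (2) and (3)) is exactly the paper's appeal to \cite[Lem 1.6]{AschenbrennerFriedl13} followed by the closed-in-closed-subspace argument.
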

\begin{proof} 
Every homomorphism $\phi:G_e\to P$ to a finite $p$-group $P$ extends to a homomorphism from $G_v$. This proves that the pro-$p$ topology on $G_v$ induces to the pro-$p$ topology on $G_e$, and it follows that the pro-$p$ topology on $G$ induces the pro-$p$ topology on $G_v$.
When the edge group $G_e$ is a retract of a vertex group $G_v$, then it is closed in the pro-$p$ topology of $G_v$ by \cite[Lem 1.6]{AschenbrennerFriedl13}. Thus if $G_v$ is closed in the pro-$p$ topology of $G$, then so is $G_e$. 
\end{proof}

We state the criterion.

\begin{Theorem}[{\cite[Cor 5.12]{AschenbrennerFriedl13}}]\label{thm: completeness AF}
Let $\mathcal G$ be a $p$-efficient graph of finitely generated groups, where all vertex and edge groups are cohomologically $p$-complete. Then $\pi_1\mathcal G$ is cohomologically $p$-complete.
\end{Theorem}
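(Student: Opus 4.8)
The plan is to follow exactly the outline sketched above: attach to $\mathcal G$ two Mayer--Vietoris long exact sequences, one for the discrete group $G = \pi_1\mathcal G$ and one for its pro-$p$ completion $\widehat G_p$, assemble them into a commutative ladder via the completion map, and conclude by the Five Lemma. The first ingredient is the ordinary cohomological Mayer--Vietoris sequence of a finite graph of groups, which comes from the action of $G$ on its Bass--Serre tree $T$. Since $T$ is a contractible $1$-complex, its augmented cellular chain complex gives a short exact sequence of $G$-modules
\[
0 \to \bigoplus_{e} \mathbb Z[G/G_e] \to \bigoplus_{v} \mathbb Z[G/G_v] \to \mathbb Z \to 0,
\]
where the sums run over the edge and vertex orbits. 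Applying $\mathrm{Ext}^*_{\mathbb Z G}(-,\mathbb F_p)$ and using Shapiro's lemma to identify $H^n(G,\mathbb Z[G/H]\otimes \mathbb F_p)$ with $H^n(H,\mathbb F_p)$, I obtain the long exact sequence
\[
\cdots \to H^n(G,\mathbb F_p) \to \bigoplus_v H^n(G_v,\mathbb F_p) \to \bigoplus_e H^n(G_e,\mathbb F_p) \to H^{n+1}(G,\mathbb F_p) \to \cdots.
\]

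Second, I need the analogous sequence for $\widehat G_p$, with continuous $\mathbb F_p$-cohomology and with the vertex and edge terms replaced by the continuous cohomology of the pro-$p$ completions of the $G_v$ and $G_e$. This is precisely where $p$-efficiency enters: by definition it guarantees that the vertex and edge groups are closed in the pro-$p$ topology of $G$ and that this topology induces the full pro-$p$ topology on each of them, so that $\widehat G_p$ is the pro-$p$ fundamental group of the induced graph of pro-$p$ groups and acts on a pro-$p$ Bass--Serre tree with the expected stabilizers. Under this hypothesis the continuous analogue of the exact sequence above is available; this is exactly \cite[Lem 5.11]{AschenbrennerFriedl13}, and it relates $H^*_{cont}(\widehat G_p,\mathbb F_p)$ to the continuous cohomologies of the pro-$p$ completions of the vertex and edge groups. (Finite generation of the vertex groups is used here to keep the completions and their continuous cohomology well behaved.)

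Third, the completion homomorphism $G \to \widehat G_p$, together with its restrictions to the vertex and edge groups, which by $p$-efficiency are the completion maps of those groups, induces a commutative ladder from the pro-$p$ sequence down to the discrete one. Since every vertex group and every edge group is assumed cohomologically $p$-complete, the vertical comparison maps on all the $\bigoplus_v H^n(G_v,-)$ and $\bigoplus_e H^n(G_e,-)$ terms are isomorphisms. The Five Lemma then forces the remaining vertical maps $H^n_{cont}(\widehat G_p,\mathbb F_p)\to H^n(G,\mathbb F_p)$ to be isomorphisms for every $n$, which is exactly the assertion that $G$ is cohomologically $p$-complete. I expect the genuine content, and the main obstacle, to lie entirely in the second step: producing the pro-$p$ Mayer--Vietoris sequence and verifying that $p$-efficiency is the correct hypothesis to ensure that $\widehat G_p$ decomposes as the pro-$p$ fundamental group of the graph of pro-$p$ completions with matching stabilizers, so that the Shapiro-type identification and the exactness both survive in the continuous setting. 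The discrete sequence and the Five Lemma step are formal; the pro-$p$ side rests on the machinery of profinite graphs of groups, which is what \cite[Lem 5.11]{AschenbrennerFriedl13} supplies.
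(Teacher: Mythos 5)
Your proposal is correct and takes essentially the same approach as the paper: the paper does not prove this statement itself but imports it as \cite[Cor 5.12]{AschenbrennerFriedl13}, and its surrounding discussion sketches exactly your argument --- a commutative ladder of discrete and pro-$p$ Mayer--Vietoris sequences closed off by the Five Lemma, with $p$-efficiency serving precisely to guarantee the pro-$p$ sequence via \cite[Lem 5.11]{AschenbrennerFriedl13}. Like the paper, you defer that one substantive ingredient to the same citation, so the two treatments coincide.
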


\begin{Theorem}
For every prime $p$, the fundamental group of an algebraically clean graph of free groups has a finite index subgroup which is cohomologically $p$-complete
\end{Theorem}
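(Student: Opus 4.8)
The plan is to combine the residual $p$-finiteness machinery already developed in Section~3 with the $p$-efficiency criterion (Theorem~\ref{thm: completeness AF}). The key point is that the finite index subgroup $G \le G'$ constructed in the proof of Theorem~\ref{t:main} is realized as the fundamental group of an algebraically clean graph of groups $\mathcal G$ all of whose vertex groups are finite rank free groups and all of whose edge groups are free factors (hence retracts) of the adjacent vertex groups. Since free groups are cohomologically $p$-complete (the cited result) and finite rank free groups are finitely generated, the hypotheses of Theorem~\ref{thm: completeness AF} on the vertex and edge groups are automatic. So the whole proof reduces to verifying that this $\mathcal G$ is $p$-efficient.

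To establish $p$-efficiency I would invoke Lemma~\ref{l:retract}, which applies precisely because the edge groups are retracts of the vertex groups. That lemma reduces $p$-efficiency to three conditions: (1) $G=\pi_1\mathcal G$ is residually $p$-finite, (2) the pro-$p$ topology on $G$ induces the full pro-$p$ topology on each vertex group $G_v$, and (3) each $G_v$ is closed in the pro-$p$ topology of $G$. Condition (1) is exactly Theorem~\ref{t:main} applied to the subgroup $G$ (the same $G$ was shown to be residually $p$-finite there). The remaining work is conditions (2) and (3).

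For conditions (2) and (3), the natural approach is to exploit the explicit finite $p$-quotients produced in Step~4 of the proof of Theorem~\ref{t:main}. Recall that for each $n$ one has the graph-of-groups quotient $\mathcal G/{\bf G}_n$ whose vertex groups are $F/\gamma_n^p(F)$, together with the further finite $p$-cover controlled by $Q_n \le Q$ with $[Q:Q_n]$ a power of $p$. Composing the quotient $G \to \pi_1(\mathcal G/{\bf G}_n)$ with the map to the finite $p$-group $\pi_1(\mathcal G_n/\ker)$ gives, for each $n$, a finite $p$-quotient of $G$ that restricts on a fixed vertex group $G_{v_0}\cong F$ to the tautological quotient $F \to F/\gamma_n^p(F)$. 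Since $\{\gamma_n^p(F)\}_n$ is a separating filtration of $F$ by powers of $p$ and realizes the full pro-$p$ topology of $F$, this shows the pro-$p$ topology of $G$ induces the full pro-$p$ topology on $G_{v_0}$, giving~(2) at $v_0$; conjugating by elements of $G$ transports this to every vertex, and the intersection of the kernels of these $p$-quotients meets $G_{v_0}$ trivially, giving~(3). Once $p$-efficiency is in hand, Theorem~\ref{thm: completeness AF} applies directly and $G$ is cohomologically $p$-complete.

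The main obstacle I expect is condition~(2), inducing the \emph{full} pro-$p$ topology on the vertex groups rather than merely separating them. It is not enough that $G$ be residually $p$-finite and that each $G_v$ be closed; one needs every finite $p$-quotient of $G_v$ to extend (after passing to a finite $p$-index subgroup of $G$) to the ambient group in a controlled way. This is where the careful construction of $Q$ so that $\theta_n(Q)$ is trivial pays off: it guarantees that the vertex-group filtration $\{\gamma_n^p(F)\}$ patches consistently across the graph into genuine finite $p$-quotients of $G$, so that the finite quotients $F/\gamma_n^p(F)$ are actually induced from $G$. Verifying that this extension is compatible across all edges simultaneously—essentially re-using the path-independence argument of Step~4—is the technical heart of the matter.
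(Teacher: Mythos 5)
Your proposal follows the paper's own proof essentially verbatim: the paper likewise takes the finite-index subgroup $\pi_1(\mathcal G)$ already constructed in the proof of Theorem~\ref{t:main}, observes that the pro-$p$ topology on each vertex group is generated by the filtration $\gamma_n^p(G_v)$ and that by construction $\pi_1(\mathcal G)$ admits, for every $n$, a homomorphism to a finite $p$-group restricting on $G_v$ to $G_v\to G_v/\gamma_n^p(G_v)$, and then combines this with Lemma~\ref{l:retract} to obtain $p$-efficiency and concludes via Theorem~\ref{thm: completeness AF}. Your additional discussion of how the Step~4 quotients yield conditions (2) and (3) of Lemma~\ref{l:retract} is consistent with, and somewhat more explicit than, what the paper writes.
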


\begin{proof}
We have already constructed in the proof of Theorem \ref{t:main} a finite index subgroup $\pi_1(\mathcal G)$ of $\pi_1(\mathcal G')$ which is residually $p$-finite. We claim that the corresponding decomposition as a graph of free groups is efficient. 
Recall that the pro-$p$ topology on vertex groups is generated by the filtration $\gamma_n^p(G_v)$. 
By construction, for every $n$, $\pi_1(\mathcal G)$ admits a homomorphism to a finite $p$-group which restricts to $G_v \rightarrow G_n/\gamma_n^p(G_v)$. 
This combined with Lemma \ref{l:retract} shows that the pro-$p$ topology on $\pi_1(\mathcal G)$ is efficient, so we are done by Theorem \ref{thm: completeness AF}.
\end{proof}


\subsection{Goodness}

Cohomological goodness is a bit easier to establish; we will give a more straightforward proof without restating the relevant definitions (which essentially involves replacing the pro-$p$ completion/topology everywhere with the profinite completion/topology). 

\begin{Theorem}
The fundamental group $G$ of an algebraically clean graph of free groups is cohomologically good. 
\end{Theorem}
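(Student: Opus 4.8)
The plan is to mirror the structure of the $p$-completeness proof almost verbatim, replacing the pro-$p$ topology and pro-$p$ completion with the profinite topology and profinite completion throughout. The key external input will be the profinite analogue of Theorem~\ref{thm: completeness AF}: by the results of Serre on goodness of graphs of groups (or the profinite Mayer--Vietoris sequence of \cite{WiltonZalesskii2010}, Prop 4.3), if $\mathcal G$ is an \emph{efficient} graph of finitely generated cohomologically good groups, then $\pi_1\mathcal G$ is cohomologically good. Here efficiency means the profinite topology on $\pi_1\mathcal G$ induces the full profinite topologies on the vertex and edge groups, and these subgroups are closed in the profinite topology. Since free groups are cohomologically good (this is Serre's original example, \cite{SerreGalois97}), all the vertex and edge groups in our decomposition are good, so the entire task reduces to establishing profinite efficiency of the splitting.

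First I would record the profinite analogue of Lemma~\ref{l:retract}: when the edge groups are retracts of the vertex groups, efficiency is equivalent to $\pi_1\mathcal G$ being residually finite, the profinite topology on $G$ inducing the full profinite topology on each $G_v$, and each $G_v$ being closed in the profinite topology of $G$. The retract hypothesis is exactly what we have, since in Proposition~\ref{prop:multiple HNN} each edge group is a free factor of a vertex group, hence a retract; and a retract of a vertex group is automatically closed in the profinite topology of that vertex group and the ambient profinite topology restricts correctly to the edge group once it does so to the vertex group. The proof of this analogue is identical to that of Lemma~\ref{l:retract}, just dropping the ``$p$'' everywhere, using that any homomorphism from an edge group to a finite group extends over the vertex group via the retraction.

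The remaining work is to verify the three efficiency conditions for the profinite topology. Residual finiteness is already known for these groups by \cite[Thm 3.4]{WisePolygons}, so unlike the $p$-completeness case we do not even need to pass to a finite-index subgroup. For the induced-topology and closedness conditions, I would reuse the filtration machinery from the proof of Theorem~\ref{t:main}, but now running over the full lower central filtration (or equivalently all finite-index characteristic subgroups) of the free vertex groups rather than the lower $p$-central filtration. The point is that free groups are residually finite with the profinite topology generated by finite-index characteristic subgroups, and one shows that $\pi_1\mathcal G$ admits, for each such finite quotient $F/\gamma_n(F)$ of a vertex group, a homomorphism to a finite group restricting to that quotient on $G_v$; this gives both that the profinite topology on $G$ induces the full profinite topology on $G_v$ and that $G_v$ is closed.

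The main obstacle is producing these finite quotients of $\pi_1\mathcal G$ that restrict correctly on the vertex groups, i.e.\ the exact profinite analogue of Steps 1--4 in the proof of Theorem~\ref{t:main}. In the $p$-setting, Corollary~\ref{cor: trivial action of a subgroup} and Proposition~\ref{prop: trivial action on L1} were used to control the action of the partial automorphisms $\phi_i^{ext}$ on the successive quotients and guarantee that the relevant automorphism images are finite $p$-groups; in the profinite setting the corresponding statement is actually easier, since each $\Aut(F/\gamma_n(F))$ is already a finite group and we do not need the delicate $p$-group bookkeeping. One passes to the finite-index subgroup $Q$ killing the action on $H_1(F;\mathbb Z)$ (or on a suitable finite characteristic quotient), builds the covering graph of groups as before, and checks that the resulting filtration is normal, separating, and separates edge groups, invoking \cite[Lem 1.6]{AschenbrennerFriedl13} exactly as in Step~3. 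The verification that paths in the graph give a well-defined finite quotient is the profinite reflection of the path-independence argument in Step~4, and goes through with $\theta_n,\sigma_n$ replaced by their integral analogues. Thus the only genuinely new content is the bookkeeping of replacing $\gamma_n^p$ by $\gamma_n$ and $\mathbb F_p$ by arbitrary finite modules, which is routine given the earlier sections.
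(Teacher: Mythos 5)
Your high-level strategy is the same as the paper's: reduce goodness to efficiency of the splitting from Proposition~\ref{prop:multiple HNN} and then invoke the profinite Mayer--Vietoris sequence of \cite[Prop 4.3]{WiltonZalesskii2010}, using that free groups are good. However, your proposed verification of efficiency contains a genuine error. You claim the profinite setting is ``easier'' because ``each $\Aut(F/\gamma_n(F))$ is already a finite group''; this is false. The integral lower central series quotients $F/\gamma_n(F)$ are \emph{infinite} finitely generated nilpotent groups (already $F/\gamma_2(F)\cong \mathbb Z^k$ with $\Aut \cong GL_k(\mathbb Z)$), the subgroups $\gamma_n(F)$ have infinite index, and the kernel of $Q'\to \Aut(H_1(F;\mathbb Z)) = GL_k(\mathbb Z)$ need not have finite index in $Q'$. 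So running the Steps 1--4 machinery of Theorem~\ref{t:main} with the ``integral analogues'' of $\theta_n,\sigma_n$ does not produce finite quotients of $G$ restricting to finite quotients of the vertex group, which is exactly what efficiency demands; the lower central filtration only sees residual nilpotence, not the profinite topology.

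Your parenthetical fallback --- replace $\gamma_n(F)$ by arbitrary finite-index characteristic subgroups $C\leq F$ --- is the correct fix, but once you adopt it the heavy covering-graph-of-groups apparatus becomes unnecessary, and the argument collapses to the paper's much shorter one: since $C$ is characteristic and each edge map extends to an automorphism of $F$, the entire iterated HNN structure descends to an iterated HNN extension $G'$ of the \emph{finite} group $F/C$; such a $G'$ is virtually free, hence residually finite, and any finite-index free subgroup $H'\leq G'$ automatically meets the finite subgroup $F/C$ trivially (a free group has no torsion), so its preimage $H\leq G$ is a finite-index subgroup with $H\cap F\subseteq C$. This at once gives both efficiency conditions for $F$ (induced topology and closedness), with no need to pass to the subgroup $Q$, no path-independence check, and no appeal to the filtration criterion of \cite{AschenbrennerFriedl13}. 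You are right that residual finiteness of $G$ itself is already available from \cite{WisePolygons}, so, unlike the $p$-complete case, the conclusion holds for $G$ and not merely a finite-index subgroup; but the core of the proof is the descent to $F/C$, which your write-up gestures at only implicitly and which cannot be replaced by the integral lower central series as stated.
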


\begin{proof}
We know $G$ decomposes as an iterated HNN extension of a free group $F$, where the edge maps extend to automorphisms of $F$. 
We claim these decompositions are efficient. 
To see this, take a finite index characteristic subgroup $C$ of $F$. 
There is an induced homomorphism from $G$ to an iterated HNN extension of $F/C$, denoted by $G'$. 
Since $F/C$ is finite, $G'$ is virtually free, so let $H"$ be any finite index free subgroup which intersects $F/C$ trivially. 
The preimage $H$ of $H'$ hence intersects $F$ inside of $C$.
This shows the HNN extension is efficient, so we are done by the profinite Mayer-Vietoris sequence \cite[Prop 4.3]{WiltonZalesskii2010}.  
\end{proof}

Since finite extensions of good groups are good, this implies that any Artin group satisfying the conditions of Corollary \ref{cor:Artin} is good as well.

\section{Virtual poly-freeness}

\begin{proof}[Proof of Theorem~\ref{thm:poly-free}]
Let $\mathcal G$ be an algebraically clean graph of finite rank free groups over a finite graph $\Gamma$. By Proposition~\ref{prop:multiple HNN}, we can assume that $\Gamma$ has a unique vertex $v$, and some finite number of loops. Then the fundamental group of $\mathcal G$ fits in the following short exact sequence
\[
1\to \llangle G_v\rrangle \to \pi_1(\mathcal G)\to \pi_1\Gamma\to 1.
\]
We claim that $\llangle G_v\rrangle$ is a (possibly infinite rank) free group. Indeed, the induced graph of groups decomposition of $\llangle G_v\rrangle$ is an infinite tree of $G_v$, amalgamated along free factors. 
The chain $1 \unlhd \llangle G_v\rrangle \unlhd \pi_1(\mathcal G)$ is a chain witnessing the normal poly-freeness of $\pi_1(G(\Gamma))$.
\end{proof}

\section{Even Artin groups}

\begin{Lemma}\label{lem:evenartin-retractions}
Let $\Gamma$ be a graph labelled by even numbers $\geq 2$, and let $\Lambda \subseteq \Gamma$ be any induced subgraph. Then $A_{\Gamma}$ retracts onto $A_{\Lambda}$.
\end{Lemma}
\begin{proof} The retraction is obtained by mapping each generator $s\in V(\Lambda)$ to itself, and each generator $s\in V(\Gamma)-V(\Lambda)$ to $1$.
\end{proof}

\begin{proof}[Proof of Theorem~\ref{thm:evenArtin}]
The proof is an induction on the number of non-edges in the defining graph $\Gamma$ of $A_{\Gamma}$. If $\Gamma$ is a full graph, then by the assumption on no $4$-cliques $\Gamma$ has at most three vertices. 
If $\Gamma$ has one vertex, then $A_{\Gamma}=\mathbb Z$ is residually finite and cohomologically good. 
If $\Gamma$ has two vertices, then $A_{\Gamma}$ is virtually $F\times\mathbb Z$ (see e.g.\ \cite[Lem 4.3]{HuangJankiewiczPrzytycki16}), which is residually finite and cohomologically good. Since residual finiteness and cohomological goodness pass to finite index supergroups \cite[Lem 3.2]{GJ-ZZ08}, $A_{\Gamma}$ has those properties. Finally, when $\Gamma$ has three vertices then $A_{\Gamma}$ is residually finite and cohomologically good by Corollary~\ref{cor:Artin}.

We now prove the inductive step. Let $\Gamma$ has a non-edge $\{u,v\}$. 
Then $A_{\Gamma}$ splits as an amalgamated product $A_{\Gamma-\{u\}}*_{A_{\Gamma-\{u,v\}}} A_{\Gamma-\{v\}}$.
By the inductive assumption $A_{\Gamma-\{u\}}$ and $A_{\Gamma-\{v\}}$ are residually finite and cohomologically good.
By Lemma~\ref{lem:evenartin-retractions}, $A_{\Gamma}$ is an amalgamated product along retracts, so $A_{\Gamma}$ is residually finite by \cite{BolerEvans73}, and cohomologically good by \cite[Prop 3.5]{GJ-ZZ08}.
\end{proof}

More generally, the argument in the proof above shows that for even Artin groups being residually finite and cohomologically good are ``free-of-infinity properties'', in the sense that if we prove that all even Artin groups whose defining graphs are cliques are residually finite and cohomologically good, then we will be able to deduce that the same holds for all even Artin groups.

\bibliographystyle{alpha}
\bibliography{../../kasia}

\end{document}